\newcommand{\be}{\begin{equation}}
\newcommand{\ee}{\end{equation}}
\newcommand{\beano}{\begin{eqn*}}
	\newcommand{\eeano}{\end{eqnarray*}}
\newcommand{\ba}{\begin{array}}
	\newcommand{\ea}{\end{array}}
\newcommand{\vtwo}{\vskip 4ex}
\def\eop{\hbox{{\vrule height7pt width3pt depth0pt}}} 
\newenvironment{proof}
{\begin{sloppypar}\vtwo\noindent{\bf Proof~: }}
	{\hspace*{\fill}\eop
	\end{sloppypar}} 
	\newcommand\bprf{\begin{proof}}  
		\newcommand\eprf{\end{proof}}
	\newtheorem{theorem}{Theorem}[section]
	\newtheorem{proposition}[theorem]{Proposition}
	\newtheorem{corollary}[theorem]{Corollary}
	\newtheorem{remark}[theorem]{Remark}
	\newtheorem{example}[theorem]{Example}
	\def\disc{{\rm disc}}
	\def\Nrd{{\rm Nrd}}
	\title{Adjoint groups over ${\mathbb Q}_p (X)$ and R-equivalence - revisited}
	\author{R. Preeti, A. Soman}
	\date{}
\newcolumntype{P}[1]{>{\centering\arraybackslash}p{#1}}
\newcolumntype{M}[1]{>{\centering\arraybackslash}m{#1}}
\begin{document}
		\maketitle

	
	\begin{abstract}
		
	\end{abstract}
	 We obtain a class of examples of non-rational adjoint classical groups of type $^2A_n$ and a group of type $^2D_3$ over the function field $F$ of a smooth geometrically integral curve over a $p$-adic field with $p \neq 2$. We also show that for any group of type $C_n$ over $F$, 
	 the group of rational equivalence classes of $G$ over $F$ is trivial, i.e., $G(F)/R=(1)$.

\section{Introduction} 
Let $F$ be the function field of a smooth, geometrically integral curve over a $p$-adic field with $p\neq 2$ and $G$ be an absolutely simple adjoint algebraic group over $F$. In \cite{PS0} we show that if $G$ is 
an adjoint algebraic group over $F$ of type ${\ }^2A_n^*$, $C_n$ or 
$D_n$ ($D_4$ non-trialitarian) such that the associated hermitian form 
has even rank, trivial discriminant (if $G$ is of type ${\ }^2A_n^*$ or $D_n$) 
and trivial Clifford invariant (if $G$ is of type $D_n$) then the group of 
rational equivalence classes, $G(F)/R$ is trivial. 
In this paper we show that the hypotheses on the hermitian forms associated to $G$ are necessary for groups of type $A_n$ and we extend the result in (Theorem $6.1$, \cite{PS0}) to any group of type $C_n$. Further, for a group $G$ of type $D_3$ with 
$h$ being an associated hermitian form, we show that if $\disc (h) $ is non-trivial then $G(F) /R$ need not be trivial. For general groups of outer type
$A_n$ and $^1 D_n$, the triviality of $G(F) /R$ remains open.  
The main results in this paper are: 

\begin{theorem}\label{A} 
  Let $p$ be a prime such that $p \not= 2$. Let $F =
  {\mathbb Q}_p (t)$ be the rational function field in one variable over
  the $p$-adic field ${\mathbb Q}_p$. 
  Then for every positive integer $n \geq 2$, there exist
  absolutely simple adjoint algebraic groups $G$ of type $^2A_{2n-1}$ over $F$ such that, the group of rational equivalence classes over $F$ is non-trivial, i.e., $G(F)/R\neq (1)$. 
\end{theorem}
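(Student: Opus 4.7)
The plan is to construct explicit counterexamples arising from hermitian forms whose discriminant is non-trivial. Since Theorem 6.1 of \cite{PS0} requires both even rank and trivial discriminant of the associated form to conclude $G(F)/R = 1$, and our forms will have even rank $2n$, the hypothesis that must fail is trivial discriminant. The non-triviality of $G(F)/R$ will then be witnessed by the discriminant class itself, detected by a multiplier invariant that factors through $R$-equivalence.

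For each $n \geq 2$, I would pick a non-square $d \in F^{*}$, form $K = F(\sqrt{d})$, and take a diagonal hermitian form $h = \langle a_{1}, \ldots, a_{2n} \rangle$ on $K^{2n}$ with discriminant $\prod a_{i}$ representing a non-trivial class in $F^{*}/N_{K/F}(K^{*})$. Setting $G = \mathrm{PGU}(h)$, an absolutely simple adjoint group of type ${}^{2}A_{2n-1}$, I would exploit the exact sequence
$$1 \to R_{K/F}(\mathbb{G}_{m}) \to \mathrm{GU}(h) \to \mathrm{PGU}(h) \to 1,$$
which by Hilbert 90 gives $G(F) = \mathrm{GU}(h)(F)/K^{*}$. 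The similitude multiplier $\mu \colon \mathrm{GU}(h) \to \mathbb{G}_{m}$ restricted to the central $K^{*}$ is $N_{K/F}$, so it descends to a group homomorphism $\bar\mu \colon G(F) \to F^{*}/N_{K/F}(K^{*})$, and I would invoke (or reprove in the style of \cite{PS0}) the analogue of Merkurjev's theorem that $\bar\mu$ factors through $G(F)/R$.

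Next I would produce an explicit $g \in G(F)$ with $\bar\mu(g) = [\prod a_{i}]$, and choose $d$ and the $a_{i}$ over $F = \mathbb{Q}_{p}(t)$ so that this class is non-trivial. For instance, arranging $K/F$ to be ramified at a closed point $v$ of the underlying curve and $\prod a_{i}$ to have odd $v$-adic valuation forces $[\prod a_{i}] \neq 1$ in $F^{*}/N_{K/F}(K^{*})$; the existence of such data is standard, using the non-triviality of $\mathrm{Br}(F)[2]$ and the residue techniques of Parimala--Suresh over $\mathbb{Q}_{p}(t)$.

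The main obstacle is the second step: establishing that $\bar\mu$ is well-defined on $R$-equivalence classes. This cannot be read off from the existence of $\bar\mu$ merely as a group homomorphism; it requires interpreting $\bar\mu$ as arising from a morphism of $G$ into a rational quotient torus (accessible through a flasque/coflasque resolution of the character lattice of the center of $\mathrm{GU}(h)$), so that $R$-equivalent $F$-points land in the same class modulo norms. Once this $R$-invariance is established, the explicit realization of the discriminant class as $\bar\mu(g)$ for a suitable similitude $g$ of $h$, together with the arithmetic input from $\mathbb{Q}_{p}(t)$, completes the argument.
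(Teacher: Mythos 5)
There is a genuine gap, and in fact the construction you propose provably cannot produce a counterexample. Two problems. First, the quotient that computes $R$-equivalence is not $F^*/N_{K/F}(K^*)$: by Merkurjev's theorem (Theorem 3.1 of the paper), $\textrm{\textbf{PGU}}(h)(F)/R \simeq G(h)/\bigl(NK^*\cdot \textrm{Hyp}(h)\bigr)$, and you never address the subgroup $\textrm{Hyp}(h)$ generated by norms from finite extensions over which $h$ becomes hyperbolic. Over these fields $\textrm{Hyp}(h)$ is much larger than $NK^*$; by the paper's Proposition 4.1 it equals $\Nrd$ of the discriminant algebra $(d,\disc(h))_F$. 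Second, and fatally, for a hermitian form $h=\langle a_1,\dots,a_{2n}\rangle$ over the quadratic extension $K=F(\sqrt d)$ itself (split underlying algebra), \emph{every} similitude multiplier already lies in $NK^*\cdot\textrm{Hyp}(h)$ when $F=\mathbb{Q}_p(t)$, $p\neq 2$. Indeed, by Jacobson's theorem $\lambda\in G(h)$ iff $\langle\langle\lambda\rangle\rangle\otimes q_h=0$ in $W(F)$, where the trace form $q_h\simeq\langle\langle d\rangle\rangle\otimes\langle a_1,\dots,a_{2n}\rangle$ lies in $I^2(F)$ with $e_2(q_h)=(d)\cup(\delta)$, $\delta=\disc(h)$. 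Since $u(F)=8$ forces $I^4(F)=0$, this condition is equivalent to $(\lambda)\cup(d)\cup(\delta)=0$ in $H^3(F,\mu_2)$, i.e.\ $\lambda\in\Nrd\bigl((d,\delta)_F\bigr)=\textrm{Hyp}(h)$. So $G(h)=\textrm{Hyp}(h)$ and $\textrm{\textbf{PGU}}(h)(F)/R=(1)$ for every split $h$ of even rank, whatever its discriminant. A non-trivial discriminant alone is not the source of non-triviality, and the ``hypothesis that must fail'' is not the one you identified.

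This explains why the paper's examples take the underlying algebra to be a \emph{non-split} quaternion algebra $H_Z$ over $Z=F(\sqrt p)$, with $H=(b,t)_F$ and $b$ a non-square unit: only for non-split index can $G(B,\tau)$ contain a multiplier $\lambda$ with $(\lambda)\cup[D(B,\tau)]\neq 0$ in $H^3(F,\mu_2)$, which is the actual obstruction to lying in $NK^*\cdot\textrm{Hyp}(B,\tau)$. The paper exhibits such a $\lambda=\pm t$ explicitly as the multiplier of a diagonal similitude built from an element $u\in H^*$ of reduced norm $-1$, computes $D(B,\tau)$ via Proposition 10.33 of \cite{KMRT}, verifies the non-vanishing of the resulting symbol $(t)\cup(p)\cup(b)$ by a residue computation in $\mathbb{Q}_p((t))$ using anisotropy of $\langle\langle b,p\rangle\rangle$, and concludes via $\textrm{Hyp}=\Nrd(D)$ (Proposition 4.1). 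Your final paragraph's concern about $R$-invariance of the multiplier map is not where the difficulty lies --- that is exactly Merkurjev's Theorem 1 and is already available; the missing content is the control of $\textrm{Hyp}$ and the choice of a non-split algebra.
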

\medskip

\begin{theorem}\label{cor-C} 
  Let $F$ be the function field of a smooth, geometrically integral curve over a $p$-adic field with $p\neq 2$. For any absolutely simple adjoint classical group $G$ of type $C_n$, the group of rational equivalence classes over $F$ is trivial, i.e., $G(F)/R = (1)$. 

\end{theorem}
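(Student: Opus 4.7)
The plan is to classify $G$ by the Brauer class of its underlying algebra and by the parity of the associated hermitian rank, then dispose of the only genuinely new case (odd rank) via an isotropy/Levi reduction leveraging Theorem~$6.1$ of \cite{PS0} and the $F$-rationality of $\mathrm{PGL}_1$ of a quaternion algebra.

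Any adjoint $F$-group of type $C_n$ has the form $G=\mathrm{PGSp}(A,\sigma)$ with $A$ a central simple $F$-algebra of degree $2n$ carrying a symplectic involution $\sigma$. Since $\sigma$ is of the first kind the Brauer class of $A$ has exponent dividing $2$, so $A$ is either split or Brauer-equivalent to a quaternion division algebra $Q$. In the split case $G\simeq\mathrm{PGSp}_{2n}$ is $F$-rational and $G(F)/R=(1)$ follows at once. Otherwise $A\simeq M_n(Q)$ and $\sigma$ is adjoint to a hermitian form $h$ of rank $n$ over $Q$ relative to the canonical involution. If $n$ is even, Theorem~$6.1$ of \cite{PS0} applies directly and gives $G(F)/R=(1)$. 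If $n=1$ then $G=\mathrm{PGL}_1(Q)=Q^\times/F^\times$ is birational to $\mathbb{P}^3$ minus the Severi--Brauer conic of $Q$ (a codimension-$2$ subvariety), hence $F$-rational and therefore $R$-trivial.

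The new and substantive case is $n\geq 3$ odd. Here I would use that $G$ is $F$-isotropic: the trace quadratic form of $h$ has dimension $4n\geq 12>u(F)=8$ by the Parimala--Suresh bound, so $h$ is isotropic and one can split off $(n-1)/2$ hyperbolic planes to write
\[
h\simeq \tfrac{n-1}{2}\,\mathbb{H}_Q\perp\langle a\rangle
\]
for some $a\in F^\times$. The stabilizer in $G$ of a maximal totally isotropic $Q$-subspace is a proper $F$-parabolic $P\subset G$ whose Levi $L$ is, modulo the centre $\mathbb{G}_m$ of $\mathrm{GSp}(A,\sigma)$, the quotient by the diagonal $\mathbb{G}_m$ of $\mathrm{GL}_{(n-1)/2}(Q)\times\mathrm{GU}(\langle a\rangle)\simeq\mathrm{GL}_{(n-1)/2}(Q)\times\mathrm{GL}_1(Q)$, coupled by the similitude-matching condition $\mu(g)=\mathrm{Nrd}(c)$. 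Each factor is $F$-rational and the diagonal $\mathbb{G}_m$-torsor is Zariski locally trivial (as $\mathbb{G}_m$ is special), so $L$ is stably $F$-rational and $L(F)/R=(1)$. By Gille's reduction of $R$-equivalence of an isotropic reductive group to its Levi, one obtains a surjection $L(F)/R\twoheadrightarrow G(F)/R$, forcing $G(F)/R=(1)$.

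The main obstacle I expect is the clean identification of the Levi $L$ as stably $F$-rational after passing to the adjoint quotient: the similitude-matching condition $\mu(g)=\mathrm{Nrd}(c)$ couples the general-linear and rank-one factors, and one must verify that this twisted coupled Levi still descends to a stably $F$-rational variety under the central $\mathbb{G}_m$ quotient. The delicate step is therefore to exhibit the adjoint Levi as a Zariski-locally-trivial $\mathbb{G}_m$-bundle over a product of general linear groups of $Q$, preserving stable rationality through the quotient; this rests on the structure theory of parabolic subgroups in $\mathrm{PGSp}$ together with the Chernousov--Merkurjev analysis of $R$-equivalence for reductive groups.
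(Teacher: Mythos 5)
Your case division rests on the claim that, because $\sigma$ is of the first kind, $A$ is split or Brauer-equivalent to a quaternion division algebra. This is the fatal gap: exponent $2$ does not force index $\le 2$. Over the function field of a $p$-adic curve the correct bound, which the paper cites from Parimala--Suresh, is $\mathrm{ind}(A)\le 4$, and biquaternion \emph{division} algebras genuinely occur over $F$ (since $u(F)=8$ there exist anisotropic $6$-dimensional Albert forms). The case you are missing --- a hermitian form of odd rank over a biquaternion division algebra with symplectic involution --- is precisely the new content of the paper's Theorem \ref{C}: there one shows $G(h)=\mathrm{Hyp}(h)=F^*$ by proving that the spinor norm group of the Albert form of $Q_1\otimes_F Q_2$ is all of $F^*$ (using $u(F)=8$), so that every $\lambda\in F^*$ is a norm from a finite extension $L/F$ over which the Albert form is isotropic; over such $L$ the algebra has index $\le 2$, Morita equivalence converts $h$ into an even-rank form over a quaternion algebra, Theorem $6.1$ of \cite{PS0} gives $\mathrm{Hyp}(h_L)=L^*$, and the norm map descends this to $\mathrm{Hyp}(h)=F^*$. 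Without an argument of this kind your proof does not cover all groups of type $C_n$ over $F$.

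The cases you do treat are essentially correct but take a different and heavier route than the paper, which simply combines Merkurjev's isomorphism $G(F)/R\simeq G_+(A,\sigma)/(NK^*\cdot\mathrm{Hyp}(A,\sigma))$ with Theorem $6.1$ and Corollary $6.2$ of \cite{PS0} for the even-rank and odd-rank quaternionic cases. Your Levi reduction for odd rank $n\ge 3$ over a quaternion algebra is plausible: the Witt decomposition $h\simeq \tfrac{n-1}{2}\,\mathbb{H}\perp\langle a\rangle$ does follow from $u(F)=8$ via Jacobson's trace-form theorem, and the surjection $L(F)/R\to G(F)/R$ comes from the big cell. But the step you yourself flag --- exhibiting the adjoint Levi as a stably rational quotient --- is exactly what would need to be written out carefully, and the machinery is unnecessary given the quoted results of \cite{PS0}. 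A minor slip: $\mathrm{PGL}_1(Q)$ is the complement in $\mathbb{P}^3$ of the quadric surface $\mathrm{Nrd}=0$, a divisor, not a codimension-$2$ conic; the rationality conclusion is nevertheless correct.
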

\medskip

\begin{remark} \label{D} Using the exceptional isomorphisms for algebraic groups of low rank, a
group of type $^2A_3$ can be identified with a group of type $^2D_3$.
Hence for $p \neq 2$, by Theorem \ref{A} we know that there exists a group $G$ of type $^2D_3$ over $\mathbb{Q}_p (t)$ such that $G({\mathbb{Q}_p (t)}) / R \neq (1)$. 
However in Example \ref{DD} we present a direct construction of such a group
of type $D_3$ over $\mathbb{Q}_p (t)$ which has non-trivial R-equivalence classes. 
\end{remark} 
\medskip

\medskip 
The triviality of $G(F)/R$ is closely related to $G$ being rational over $F$. 
For $F$ as above we note that the cohomological dimension of $F$ is $\leq 3$ (see \cite{Se1}).
Over some fields of cohomological dimension $\leq 3$, 
Merkurjev (Theorem $3$, \cite{M}) has shown that there exist groups
of type $^2 D_3$ which are non-rational and 
Gille (Theorem $3$, \cite{G1}) has shown that there exist groups of
type $^1 D_4$ which are non-rational. 
However, those examples 
do not yield information on the triviality of $G(F)/R$ over the function field $F$ of a $p$-adic curve. Furthermore, 
the existence of such non-rational groups over the function field of a $p$-adic curve
$F$, was not known earlier. 
As an immediate corollary of Theorem \ref{A} and remark \ref{D} we
get examples of non-rational adjoint classical groups over $F$.  
\medskip 

\begin{corollary}
  Let $F=\mathbb{Q}_p(t)$ with $p\neq 2$, be a rational function field over $\mathbb{Q}_p$ in one variable.
\begin{enumerate} 
\item If $p \not= 2$, for every positive integer
  $n \geq 2$, there exist absolutely simple adjoint algebraic groups $G$ of type $^2A_{2n-1}$  defined over $F$ which are non-rational. 
\item For $ p \neq 2$, there exist groups $G$ of type $^2D_3$ defined over $F$ which are non-rational. 
\end{enumerate}  
\end{corollary}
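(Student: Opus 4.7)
The plan is to deduce both statements essentially for free from Theorem \ref{A} and Remark \ref{D} by invoking the standard implication
\[
G \text{ is } F\text{-rational} \ \Longrightarrow \ G(F)/R = (1).
\]
This implication holds because an $F$-rational variety is $R$-trivial (indeed, any two $F$-points of an $F$-rational variety can be connected by a chain of $F$-rational curves, in fact by a single $F$-rational line after generic position arguments), and for connected algebraic groups the equivalence relation on $G(F)$ defined this way is exactly $R$-equivalence in the sense of Colliot-Th\'el\`ene--Sansuc. I will open the proof by recording this implication, with a citation to the appropriate reference used elsewhere in the paper.

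For part (1), I would then take, for each $n \geq 2$, a group $G$ of type $^2A_{2n-1}$ over $F = \mathbb{Q}_p(t)$ produced by Theorem \ref{A} with $G(F)/R \neq (1)$. By the contrapositive of the implication above, such a $G$ cannot be $F$-rational. Since Theorem \ref{A} produces such a $G$ for every $n \geq 2$, we obtain non-rational absolutely simple adjoint groups of type $^2A_{2n-1}$ for every $n \geq 2$, as required.

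For part (2), I would appeal to Remark \ref{D}: the exceptional isomorphism $^2A_3 \cong {}^2D_3$ converts the group provided by Theorem \ref{A} (with $n=2$) into a group of type $^2D_3$ over $F$ with non-trivial $R$-equivalence, and then the same contrapositive argument shows this group is non-rational. Alternatively, one could cite the explicit construction to be given later in Example \ref{DD}, but since the corollary is claimed as immediate from Theorem \ref{A} and Remark \ref{D}, the isomorphism-based argument is the natural route.

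There is essentially no obstacle here: the substantive content is contained in Theorem \ref{A} (whose proof will occupy the bulk of the paper) and in the exceptional isomorphism invoked in Remark \ref{D}. The only care required is to ensure the implication ``rational $\Rightarrow$ $R$-trivial'' is cited for algebraic groups over a general infinite field (which $F = \mathbb{Q}_p(t)$ certainly is), so that the contrapositive is valid in our setting.
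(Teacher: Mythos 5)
Your proposal is correct and matches the paper's argument: the paper likewise deduces non-rationality from the non-triviality of $G(F)/R$ established in Theorem \ref{A} (and, for type $^2D_3$, via Remark \ref{D} or the explicit construction in Example \ref{DD}), citing Proposition $1$ of \cite{M} for the implication that a rational group is $R$-trivial. No issues.
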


\medskip

We summarise below the known results, including the ones in this paper, alongwith the remaining open cases for convenience. Let $F$ be the function field of a smooth, geometrically integral curve over a $p$-adic field with $p\neq 2$.
	\medskip
	
\begin{center}
	\centering
\begin{tabular}{ |c|M{13cm}| }
	\hline
	{\bf Type of group $G$} &  \\ \hline
	\hline
	\multirow{1}{*}{$^1A_{n-1}$, $(n\geq 2)$} & \vspace{.5mm}
        $\textbf{G}(F)/R = (1)$ and $G$ is rational (\S $2$, \cite{M}).\\ \hline
	\multirow{10}{*}{$^2A_{n-1}$} & \vspace{.5mm} $\textbf{G}(F)/R = (1) $; when $n$ is odd due to Merkurjev (\S $2$, \cite{M}).\\\cline{2-2}
	& \vspace{.5mm}  $\textbf{G}(F)/R = (1) $; when associated central simple algebra has square-free index and associated hermitian form has even rank and trivial discriminant (Theorem $5.3$, \cite{PS0}). \\ \cline{2-2}
   & \vspace{.5mm} $\textbf{G}(F)/R$ need not be $(1)$ ; when $m \geq 2$ and  $n= 2m$ and $p \not= 2$ (Theorem \ref{A}) in this paper. \\ \cline{2-2} 
	 & \vspace{.5mm}  Triviality of $\textbf{G}(F)/R$ is not known when underlying central simple algebra has index divisible by square of a prime.  \\\hline
	 
	\multirow{1}{*}{$B_n$} & \vspace{.5mm} $\textbf{G}(F)/R = (1)$ and
        $G$ is rational (\S $2$, \cite{M}). \\ \hline
	\multirow{1}{*}{$C_n$} & $\textbf{G}(F)/R = (1) $, for $n=2$ and $n$ odd by Merkurjev (\S $2$, lemma $3$, \cite{M}) and for other cases (Theorem $6.1$, \cite{PS0}) and Theorem \ref{cor-C} in this paper.\\ \hline 
	\multirow{8}{*}{$^1D_n$} & \vspace{.5mm} $\textbf{G}(F)/R = (1)$; when $n=2$ ($D_4$ non-trialitarian) and $n=3$ due to Merkurjev (proposition $5$, \cite{M}).\\ \cline{2-2}
	& \vspace{.5mm} $\textbf{G}(F)/R = (1) $; when associated hermitian form has even rank, trivial discriminant and trivial Clifford invariant (Theorem $7.2$, \cite{PS0}). \\ \cline{2-2}
	& \vspace{.5mm} Not known when associated hermitian form has even rank, trivial discriminant and non-trivial Clifford invariant.  \\ \hline
	\multirow{1}{*}{$^2D_n$} & $\textbf{G}(F)/R$ need not be $ (1) $;  when $n=3$
                 (Theorem \ref{D} this paper). \\ \hline
	
\end{tabular}
\end{center}
\medskip

{\bf Acknowledgement:} We thank J.-P. Tignol for his comments on an earlier version of this preprint, especially on the proof of Theorem \ref{A} in the case when
$-1$ is a square in $\mathbb{Q}_p$.  
	
	\section{Preliminaries}

	In this section, we 
	recall some basic notions on hermitian forms over algebras with involutions. 
	
	\subsection{Notations and basic definitions} 
	
	Let $K$ be a field of characteristic different from $2$. 
	Let $(A, \sigma)$ denote a central simple algebra over a field $K$ with 
	an involution, that is, $\sigma :A \to A$ is an anti-automorphism 
	of order $2$. Let $E = K^{\sigma}$ denote the fixed field of $K$ under $\sigma$. 
	Then either $E = K$ or $K$ is a quadratic field extension of $E$. 
	
	\medskip 
	
	For $\epsilon = \pm 1$, we denote by $(V,h)$ an $\epsilon$-hermitian form over $(A,\sigma)$.  
	We denote by $W(A, \sigma)$ 
	the Witt group of non-degenerate hermitian forms over $(A, \sigma)$. 
	We refer to \cite{L} and 
	\cite{S} for basic facts on quadratic and 
	hermitian forms and their Witt groups. If $A= E$ and $\sigma$ is the 
	identity then $W(A, \sigma) = W(E)$, the usual Witt group of 
	non-degenerate quadratic forms over $E$. For $a,b\in E^*$, we denote the associated quaternion algebra over $E$ by $(a,b)$. This is a central simple algebra over $E$ of degree $2$ with basis $\{1,i,j,ij\}$ subject to the relations $i^2=a, j^2=b, ij=-ji$. We denote an $n$-fold Pfister form by $\langle\langle a_1,\ldots,a_n\rangle\rangle=\langle 1,-a_1\rangle\otimes\cdots\otimes\langle1,-a_n\rangle$, for $a_1,\ldots,a_n\in E^*$.
	\medskip 
	
	\subsection{Invariants of hermitian forms over $(A, \sigma)$} 
	
	We recall some Galois cohomological invariants of hermitian forms over 
	$(A, \sigma)$. 
	We refer the reader to \cite{BP1}, \cite{BP2}, \cite{PP} and \cite{P} 
	for more details. 
	Suppose that $A = D$, a division algebra. Let $(V,h)$ be a non-degenerate 
	hermitian form over $(D, \sigma)$.

	\begin{enumerate} 
		\item Rank $rank(h)$ : The {\it rank} of $(V,h)$ 
		is defined as the dimension of the 
		underlying $D$-vector space $V$, say $n = dim_D (V)$. 
		
		\item Discriminant $\disc(h)$ : Given a basis of the $D$-module $V$, the 
		hermitian form $h$ is given by some matrix $M(h)$ in this basis. Let 
		$A = M_n(D)$ and let $m^2 = dim_K (A)$. 
		The {\it discriminant}, $\disc (h)$ of $(V,h)$ is 
		defined as: \\
		$\disc(h) = (-1)^{m(m-1)/2}\ \Nrd_A (M(h)) \in E^*/E^{*2}\  
		{\rm if}\ \sigma\ {\rm is\ of\ first\ kind\ }$ \\ 
		$\disc(h) =  (-1)^{m(m-1)/2}\ \Nrd_A (M(h)) \in E^*/N_{K|E} (K^*)\  
		{\rm if}\ \sigma\ {\rm is\ of\ second\ kind\ }$. 
		
		\item Clifford invariant $c(h)$ : Suppose that $(D,\sigma)$ is of the 
		first kind and orthogonal type over $E$. Let $(V,h)$ and $(V,h^{\prime})$ be 
		non-degenerate hermitian forms over $(D, \sigma)$ such that $\disc(h) = 
		\disc(h^{\prime} )$. The {\it relative Clifford invariant}, 
		$Cl_h (h^{\prime} ) \in {}_2 Br (E) / (D) $ is defined by Bartels \cite{B}. Let 
		$H_{2n}$ be a hyperbolic form of rank $2n$ over $(D,\sigma)$. Let $(V,h)$ 
		be a hermitian form such that $rank (h) = 2n$ and $\disc(h)$ is trivial. Then 
		the {\it Clifford invariant}, $c(h) : = Cl_{H_{2n}} (h)$, 
		(see \S $2$ \cite{BP1} for more details). If $D = E$ then this invariant 
		is the usual Clifford invariant of the quadratic form $h$. 
		
		\item Rost invariant $R(h)$ : We refer to the relevant sections in $\cite{BP1}$ and $\cite{PP}$ for the definition of this invariant.
\end{enumerate}	
		\medskip

	\medskip 
	
	\subsection{Multipliers of similitudes}\label{similitude}
	
	Let $K$ be a field of characteristic not equal to $2$ and $(A, \sigma)$ be a central simple algebra over $K$ with an involution. Let $E=K^{\sigma}$ be the fixed subfield under $\sigma$ in $K$. 
	A similitude of $(A,\sigma)$ is an element $g\in A$ such that $\sigma(g)g\in E^*$. The scalar $\sigma(g)g$ is called the multiplier of the similitude $g$ and it is denoted by $\mu(g)$. The set of all similitudes of $(A,\sigma)$ is a subgroup of $A^*$ which is denoted by $\textrm{Sim}(A,\sigma)$, and the map $\mu$ is a group homomorphism $\mu: \textrm{Sim}(A,\sigma)\rightarrow E^*$. 
We refer to (\S $12$.B and \S $12$.C \cite{KMRT}) for more details on similitudes of 
	$(A, \sigma)$. 	
	The image of the map $\mu$ is denoted by $G(A,\sigma)$. 
	Suppose $\sigma$ is an involution of orthogonal type 
	on a central simple algebra $A$ of even degree $2m$ 
	over a field $K$. For every similitude $g\in\textrm{Sim}(A,\sigma)$ we have $\textrm{Nrd}_A(g)=\pm\mu(g)^m$. A similitude is called proper if $\textrm{Nrd}_A(g)=+\mu(g)^m$, otherwise it is called an improper similitude. In this case we write $G_+(A,\sigma)$ for the group of multipliers corresponding to proper similitudes. 
	By convention, for a symplectic involution or 
	an involution of the second kind, we 
	set $G_+(A,\sigma)=G(A,\sigma)$. Consider the algebraic group $\textrm{\textbf{PSim}}(A,\sigma)$ defined by
	\begin{displaymath}
	\textrm{\textbf{PSim}}(A,\sigma)= \textrm{\textbf{Sim}}(A,\sigma)/R_{K/E}(G_m), 
	\end{displaymath}
	where $R_{K/E}$ is the Weil restriction from $K$ to $E$ and $K$ is the center of $A$ (see \cite{KMRT} for details). 
	The connected component of identity of the algebraic group $\textrm{\textbf{PSim}}(A,\sigma)$ is denoted by $\textrm{\textbf{PSim}}_+(A,\sigma)$.
	Following the usual notation (\S $12$.B \cite{KMRT}), we denote 
	the algebraic groups $\textrm{\textbf{PSim}}(A,\sigma)$ according to the type 
	of $\sigma$ as : 
	
	\[
	\textrm{\textbf{PSim}}(A,\sigma)=
	\begin{cases}
	\textrm{\textbf{PGO}}(A,\sigma) &\text{ if $\sigma$ is of orthogonal type,}\\
	\textrm{\textbf{PGSp}}(A,\sigma) &\text{ if $\sigma$ is of symplectic type,}\\
	\textrm{\textbf{PGU}}(A,\sigma) &\text{ if $\sigma$ is of unitary type.}
	\end{cases}
	\]
	We consider the groups $\textrm{Sim}(A,\sigma)$ and $\textrm{PSim}(A,\sigma)$ as the group of $F$-points of the corresponding algebraic groups $\textrm{\textbf{Sim}}(A,\sigma)$ and $\textrm{\textbf{PSim}}(A,\sigma)$ respectively. When the involution $\sigma$ is of unitary or symplectic type $\textbf{\textrm{PSim}}(A,\sigma)$ is a connected group (see \S $2$ \cite{M}). In the case of an orthogonal involution $\sigma$ we denote the connected component of $\textbf{\textrm{PSim}}(A,\sigma)$ by $\textbf{\textrm{PGO}}_+(A,\sigma)$. 
	\medskip 
	
	For $(A, \sigma)$, a central simple algebra over $K$ with an involution, let $E=K^{\sigma}$. Set 
	$NK^*:=\{\sigma(z)z:z\in K^*\}$. If $\sigma$ is an involution of the first kind 
	then $NK^*=E^{*2}$ and if $\sigma$ is of the second kind then, $NK^*$ is the group of norms $N_{K/E}(K^*)$ of the  quadratic field 
	extension $K/E$. We denote by $\textrm{Hyp}(A,\sigma)$ the subgroup of $E^*$ generated by the norms of all finite extensions $M/E$ such that $\sigma_M$ is a hyperbolic involution. 
	Further, for $(D, \sigma)$ 
	a central division algebra over $K$ with an involution $\sigma$, and 
	$h$ a non-degenerate hermitian form over $(D,\sigma)$ of rank $m$, we denote 
	by $G(h)$ and $G_+(h)$ the groups $G(M_m(D),\sigma_h)$ and $G_+(M_m(D),\sigma_h)$ respectively, where $\sigma_h$ is the adjoint involution corresponding to 
	$h$. We also 
	denote by $\textrm{Hyp}(h)$ the group $\textrm{Hyp}(M_m(D),\sigma_h)$. 
	\medskip
	
	\section{Some known results} 
	
	In this subsection we recall some results which are used in the proofs of the main theorems. We refer to the earlier section for 
	notation and terminology. We start with the following 
	result due to Merkurjev. 
	\medskip 
	
	\begin{theorem}\label{Merkurjev}(Theorem 1, \cite{M}) 
		With notation as in section \ref{similitude} above, there is a natural isomorphism
		\begin{displaymath}
		\textrm{\textbf{PSim}}_+(A,\sigma)(E)/R \simeq G_+(A,\sigma)/(NK^*\cdot\textrm{Hyp}(A,\sigma)).
		\end{displaymath}
	\end{theorem}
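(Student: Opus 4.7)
The plan is to study the multiplier homomorphism $\mu: \textbf{\textrm{Sim}}_+(A,\sigma)\to \mathbb{G}_{m,E}$ together with the exact sequence of algebraic $E$-groups
\begin{equation*}
1 \longrightarrow R_{K/E}(\mathbb{G}_m) \longrightarrow \textbf{\textrm{Sim}}_+(A,\sigma) \longrightarrow \textbf{\textrm{PSim}}_+(A,\sigma) \longrightarrow 1,
\end{equation*}
noting that on $E$-points the image of $\mu$ is exactly $G_+(A,\sigma)$ by definition, and that the composition $R_{K/E}(\mathbb{G}_m)(E)=K^*\to E^*$ given by $z\mapsto \mu(z)=\sigma(z)z$ has image $NK^*$. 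So the first step is to define a map $\Phi:\textbf{\textrm{PSim}}_+(A,\sigma)(E)\to G_+(A,\sigma)/NK^*$ by $\bar g\mapsto \mu(g)\,\mathrm{mod}\,NK^*$, after choosing a lift $g\in\textrm{Sim}_+(A,\sigma)$; the independence of the lift is immediate from the description above, and surjectivity is by construction.

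Next I would show that $\Phi$ factors through R-equivalence and kills the hyperbolic norm subgroup. For the first, recall that $\textbf{\textrm{Sim}}_+(A,\sigma)$ is a connected reductive group and the multiplier $\mu$ is a character to a torus; any two R-equivalent points in $\textbf{\textrm{PSim}}_+(A,\sigma)(E)$ lift to points of $\textbf{\textrm{Sim}}_+$ joined by rational curves up to multiplication by $R_{K/E}\mathbb{G}_m$-valued functions, so their multipliers differ by an element of $NK^*$. For the hyperbolic part, given a finite extension $M/E$ over which $\sigma_M$ becomes hyperbolic, one uses that $\textbf{\textrm{PSim}}_+(A_M,\sigma_M)$ is then $E$-isomorphic to $\textbf{\textrm{PGL}}_m$ (or a Weil restriction thereof), hence is rational and $R$-trivial; a norm-argument (or direct transfer) then shows that the norm $N_{M/E}(\mu)$ of any multiplier over $M$ already represents the trivial R-class over $E$. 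Combining, $\Phi$ descends to the desired map $\overline{\Phi}:\textbf{\textrm{PSim}}_+(A,\sigma)(E)/R\to G_+(A,\sigma)/(NK^*\cdot\textrm{Hyp}(A,\sigma))$.

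The injectivity of $\overline{\Phi}$ is the main content and, in my view, the main obstacle. Here one must show that if $g\in \textrm{Sim}_+(A,\sigma)$ has multiplier of the form $\sigma(z)z\cdot \prod_i N_{M_i/E}(\lambda_i)$ with each $\sigma_{M_i}$ hyperbolic, then the class of $g$ in $\textbf{\textrm{PSim}}_+(A,\sigma)(E)$ is R-trivial. The key geometric input is a construction producing, for every $\lambda\in N_{M/E}(M^*)$ with $\sigma_M$ hyperbolic, an explicit rational curve $\mathbb{A}^1\setminus S\to \textbf{\textrm{PSim}}_+(A,\sigma)$ connecting the identity to a similitude with multiplier $\lambda$; this typically comes from a generic (or Cayley-type) parametrization of the group $\textbf{\textrm{PSim}}_+(A_M,\sigma_M)$, which is rational when $\sigma_M$ is hyperbolic, combined with a transfer/corestriction argument to descend the resulting R-equivalence from $M$ back to $E$.

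Finally, injectivity on the $NK^*$ factor is handled by the observation that the lift of a class in $\textbf{\textrm{PSim}}_+(A,\sigma)(E)$ is only well-defined modulo $R_{K/E}(\mathbb{G}_m)(E)=K^*$, and that $K^*\to \textbf{\textrm{Sim}}_+(A,\sigma)(E)$ has image whose multipliers span $NK^*$; combining with the hyperbolic step gives the inverse map $G_+(A,\sigma)/(NK^*\cdot\textrm{Hyp}(A,\sigma))\to \textbf{\textrm{PSim}}_+(A,\sigma)(E)/R$, completing the proof. The difficulty is concentrated in the hyperbolic/R-triviality construction; everything else is bookkeeping around the exact sequence.
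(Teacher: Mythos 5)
You should first note that the paper offers no proof of this statement at all: it is quoted verbatim as Theorem 1 of Merkurjev's paper \cite{M}, and the citation is the entire justification, so the comparison has to be with Merkurjev's own argument. Your sketch does reproduce its skeleton (lift through the central isogeny, which is possible since $H^1(E,R_{K/E}(\mathbb{G}_m))=1$; push forward by the multiplier $\mu$; identify the quotient), but there is a genuine gap in the descent through R-equivalence. Your claim that two R-equivalent points of $\textrm{\textbf{PSim}}_+(A,\sigma)(E)$ have multipliers differing by an element of $NK^*$ is false, not merely unproved: if it held, $\mu$ would induce a surjection $\textrm{\textbf{PSim}}_+(A,\sigma)(E)/R\twoheadrightarrow G_+(A,\sigma)/NK^*$, and taking $\sigma$ adjoint to a hyperbolic quadratic form over $E$ the left-hand side is trivial (the group is split adjoint, hence rational) while the right-hand side is $E^*/E^{*2}$. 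A rational function on $\mathbb{P}^1$ regular and invertible at $0$ and $1$ imposes no relation between its two values modulo $NK^*$. The factor $\textrm{Hyp}(A,\sigma)$ is needed already for well-definedness, and proving that the multiplier of an R-trivial similitude lies in $NK^*\cdot\textrm{Hyp}(A,\sigma)$ is one of the two hard halves of the theorem: Merkurjev factors $\mu(g(t))\in E(t)^*$ according to its divisor on $\mathbb{P}^1$ and shows that at each closed point contributing an odd valuation the involution becomes hyperbolic over the residue field, so that $\mu(g(1))\mu(g(0))^{-1}$ is, modulo $NK^*$, a product of norms from such fields.

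The other hard half, injectivity, you correctly locate in the statement that a similitude with multiplier $N_{M/E}(\lambda)$, with $\sigma_M$ hyperbolic, is R-trivial; but you only assert the norm principle that delivers it, and the structural claim you lean on is wrong: when $\sigma_M$ is hyperbolic, $\textrm{\textbf{PSim}}_+(A_M,\sigma_M)$ is not a projective linear group $\textrm{\textbf{PGL}}_m$ (for a hyperbolic orthogonal involution it is an isotropic form of $\textrm{\textbf{PGO}}^+_{2n}$, which still need not be rational for free). What Merkurjev actually uses is that $G_+(A_M,\sigma_M)=M^*$ for hyperbolic $\sigma_M$, together with his norm principle for the multiplier homomorphism, to place $N_{M/E}(M^*)$ inside the multipliers of R-trivial similitudes. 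Since essentially all of the content of the theorem sits in these two implications, the proposal as it stands is a reasonable outline of the statement's architecture, with one direction resting on a false intermediate claim and the other on an unproved one.
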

	\medskip

	Let $F$ be the function field of a smooth, geometrically integral curve over a $p$-adic field with $p\neq 2$. 
	We next recall the well-known result due to Parimala-Suresh on the
        $u$-invariant, 
	$u(F)$ of $F$, where we recall that for a field $E$, 
	$u(E)$ is the largest dimension of an 
	anisotropic quadratic form over $E$ and is $\infty$ if such an integer does 
	not exist for $E$. 
	
	\begin{theorem} (Theorem $4.6$, \cite{PS}) 
		Let $F$ be the function field of a curve over a $p$-adic field. 
		If $p\neq 2$, then $u(F)=8$. 
	\end{theorem}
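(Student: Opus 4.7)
The plan is to prove the two inequalities $u(F) \geq 8$ and $u(F) \leq 8$ separately, the latter being by far the harder direction.

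\emph{Lower bound.} For $u(F) \geq 8$ I would exhibit an explicit anisotropic $3$-fold Pfister form. Since $p \neq 2$, pick a non-square unit $u \in \mathbb{Z}_p^*$; let $\pi$ be a uniformizer of $\mathbb{Z}_p$, and let $t$ be a parameter at some codimension-one point of a regular model of $F$ whose residue field is a $p$-adic field. The Pfister form $\phi = \langle\langle u, \pi, t \rangle\rangle$ has Arason invariant $e_3(\phi) = (u) \cup (\pi) \cup (t) \in H^3(F, \mathbb{Z}/2)$, and iterated residues at $t$ and then at $\pi$ detect that this class is nonzero, forcing $\phi$ to be anisotropic of dimension $8$.

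\emph{Upper bound.} For $u(F) \leq 8$ one must show every $9$-dimensional form $q$ over $F$ is isotropic. A standard reduction writes $q = q' \perp \langle d \rangle$ with $\dim q' = 8$ and $\disc(q') = 1$, placing $q'$ in $I^2 F$; isotropy of $q$ is equivalent to $q'$ representing $-d$. Two cohomological inputs drive the argument. First, since $F$ has $2$-cohomological dimension at most $3$, the Milnor conjecture (Voevodsky) combined with Arason--Pfister finiteness yields $I^4 F = 0$, so classes in $I^3 F$ are determined by their Arason invariant in $H^3(F, \mathbb{Z}/2)$. Second, the key nontrivial input: every class in $H^3(F, \mathbb{Z}/2)$ is a symbol $(a) \cup (b) \cup (c)$. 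Granting these, the $I^3$-contribution of $q' \perp \langle d \rangle$ is realised by a genuine $3$-fold Pfister form $\psi$, and since $I^4 F = 0$ this equality holds in $W(F)$; a dimension comparison together with Arason--Pfister then forces isotropy of the $9$-dimensional form $q$.

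\emph{Main obstacle.} The decisive step is the symbol-presentation theorem for $H^3(F, \mathbb{Z}/2)$. Its proof uses Saltman's bound that every $2$-torsion class in $\mathrm{Br}(F)$ has index dividing $4$, together with a careful analysis on a regular proper arithmetic surface $\mathcal{X}$ over $\mathrm{Spec}(\mathcal{O}_k)$: after iteratively blowing up to put the ramification divisors of a given cohomology class into good (normal crossings) position, one glues local symbol presentations by matching residues at the horizontal, vertical, and crossing points. The hypothesis $p \neq 2$ enters both in Saltman's index estimate and in the fact that the relevant residue fields (either $p$-adic local fields or function fields of curves over finite fields of residue characteristic not $2$) have $u$-invariant $4$, which underpins the local computations. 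Managing ramification simultaneously at the crossing points, where two valuations interact, is the most delicate part of the argument and is where the real work lies.
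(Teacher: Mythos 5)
First, a point of comparison: the paper does not prove this statement at all --- it is quoted verbatim from Parimala--Suresh \cite{PS} (their Theorem $4.6$) as a known input, so there is no in-paper proof to measure your attempt against. Judged on its own terms, your outline does track the actual strategy of \cite{PS}: the lower bound via an explicit anisotropic $3$-fold Pfister form detected by iterated residues is fine, and you correctly identify the symbol-presentation theorem for $H^3(F,\mathbb{Z}/2)$ (resting on Saltman's index bound and a careful analysis on a regular model) as the decisive input for the upper bound.

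However, the deduction of $u(F)\leq 8$ from that input, as you have written it, has a genuine gap. You cannot in general write a $9$-dimensional $q$ as $q'\perp\langle d\rangle$ with $\disc(q')$ trivial: splitting off a one-dimensional subform forces $d$ to be a value of $q$, and the resulting $\disc(q')$ is then determined, not chosen. More seriously, even granting $q'\in I^2F$, the ``$I^3$-contribution'' step skips the Clifford invariant entirely. An $8$-dimensional form in $I^2F$ is not congruent to a $3$-fold Pfister form modulo $I^3F$ unless its Clifford invariant vanishes; in general one only gets $q'\equiv \gamma \pmod{I^3F}$ where $\gamma$ is a $6$-dimensional Albert form of a biquaternion algebra representing $c(q')$ (this is where Saltman's index-$\leq 4$ bound enters the quadratic-form argument directly, not merely inside the proof of the symbol theorem). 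Writing $q'=\gamma+\psi$ in $W(F)$ with $\psi$ a $3$-fold Pfister form only bounds the anisotropic dimension of $q'$ by $14$, and no ``dimension comparison together with Arason--Pfister'' then forces a $9$-dimensional form to be isotropic. Closing this gap --- essentially showing that $\gamma$ and a suitable scaling of $\psi$ have a common value, or equivalently that $\gamma\perp\psi$ has anisotropic dimension at most $8$ --- is a substantial portion of \S 4 of \cite{PS} and requires further field-specific arguments beyond $I^4F=0$ and the symbol presentation as you have invoked them.
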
 
	\medskip 
	
	\medskip 
	
We now list adjoint classical groups over $F$ for which $G(F)/R$ is known to be trivial (see \cite{PS0}). 
We start with the following theorem on groups of type $A_n$ (Theorem $5.3$, \cite{PS0}).
\medskip

\begin{theorem} Let $F$ be the function field of a smooth, geometrically integral curve over a $p$-adic field with $p\neq 2$. Let $Z/F$ be a quadratic 
	field extension and $(Q,\tau)$ a quaternion algebra over $Z$ with a unitary $Z/F$ involution. Let $h$ be a hermitian form over $(Q,\tau)$ of even rank $2n$ and trivial discriminant. Then for the adjoint group $\textbf{\textrm{PGU}}(M_{2n}(Q),\tau_h)$, the 
	group of rational equivalence classes is trivial, i.e., 
	$$ 
	\textbf{\textrm{PGU}}(M_{2n}(Q),\tau_h)(F)/R = (1). 
	$$
	In fact, 
	$$
	G(h)=\textrm{Hyp}(h)=F^*.
	$$ 
\end{theorem}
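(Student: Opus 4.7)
My plan is to apply Merkurjev's isomorphism (Theorem~\ref{Merkurjev}) to convert the statement into the assertion $G(h)=N_{Z/F}(Z^*)\cdot\textrm{Hyp}(h)$, and then to strengthen this by proving the sharper equality $\textrm{Hyp}(h)=F^*$. Since the unitary involution forces $G(h)\subseteq F^*$, and $\textrm{Hyp}(h)\subseteq G(h)$ always holds, the three-term chain $G(h)=\textrm{Hyp}(h)=F^*$ follows and the R-equivalence quotient is trivialized at once.

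For the easy containment $N_{Z/F}(Z^*)\subseteq \textrm{Hyp}(h)$, I would observe that the base change $(Q,\tau)\otimes_F Z$ is split: via $Z\otimes_F Z\cong Z\times Z$ the pair becomes isomorphic to the swap involution on $Q\times Q^{\mathrm{op}}$, and every hermitian form over such a split involution of the second kind is hyperbolic. Hence $h_Z$ is hyperbolic and every norm from $Z$ belongs to $\textrm{Hyp}(h)$.

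The crux is to extend this to all of $F^*$. For an arbitrary $\lambda\in F^*$, the natural candidate is $L=F(\sqrt{-\lambda})$, in which $\lambda=N_{L/F}(\sqrt{-\lambda})$ holds automatically, so the burden is entirely to show $h_L$ is hyperbolic. The even rank $2n$ and trivial discriminant hypotheses put the Witt class of $h$ in the square $I^2$ of the fundamental ideal of $W(Q,\tau)$, so the only remaining obstruction to hyperbolicity is a Clifford/Rost-type invariant living in $H^3(F,\mu_2^{\otimes 2})/(Q)$. Using $\mathrm{cd}(F)\leq 3$ together with the Parimala--Suresh bound $u(F)=8$, together with the classification of hermitian forms over $(Q,\tau)$ by their Rost invariant (see \cite{BP1}, \cite{PP}), one forces this obstruction to die over $L$; when a single $L$ is not enough, one multiplies by an element of $N_{Z/F}(Z^*)$ already known to lie in $\textrm{Hyp}(h)$, which amounts to choosing representatives for each coset of $F^*/N_{Z/F}(Z^*)$.

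The main obstacle I anticipate is precisely this last hyperbolicity check: establishing that for every $\lambda\in F^*$ the extension $L=F(\sqrt{-\lambda})$, possibly enlarged in a controlled way, actually kills the Clifford/Rost invariant of $h$. This is where the specific geometry of $F$ --- its cohomological dimension and $u$-invariant --- must be exploited in tandem with the rank and discriminant assumptions on $h$; the remaining steps are essentially formal consequences of Merkurjev's theorem.
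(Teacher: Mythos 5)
This statement is quoted verbatim from Theorem $5.3$ of \cite{PS0}; the present paper recalls it without proof, so there is no in-paper argument to compare yours against, and I will assess your proposal on its own terms. Your framing is the correct one: by Theorem \ref{Merkurjev} everything reduces to $\textrm{Hyp}(h)=F^*$, since $\textrm{Hyp}(h)\subseteq G(h)\subseteq F^*$ for a unitary involution, and your observation that $h_Z$ is hyperbolic (the involution becomes the exchange involution on $Q\times Q^{\mathrm{op}}$ over $Z\otimes_F Z\cong Z\times Z$) correctly yields $N_{Z/F}(Z^*)\subseteq \textrm{Hyp}(h)$.

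The genuine gap is the step you yourself flag as the main obstacle, and it is not a technicality: it is the entire content of the theorem. For arbitrary $\lambda\in F^*$ you must produce finite extensions over which $h$ becomes hyperbolic and whose norm groups multiply up to contain $\lambda$, and the candidate $L=F(\sqrt{-\lambda})$ does not do this. Granting the classification of such hermitian forms over fields with $cd_2\leq 3$ by rank, discriminant and the degree-$3$ (Rost) invariant, hyperbolicity of $h_L$ would force $R(h)$ to lie in $\ker\bigl(H^3(F,\mu_2)\to H^3(L,\mu_2)\bigr)=(-\lambda)\cup H^2(F,\mu_2)$; for a fixed nonzero $R(h)$ this fails for all but very special $\lambda$, and neither $\mathrm{cd}(F)\leq 3$ nor $u(F)=8$ forces an $H^3$-class to die over a quadratic extension prescribed in advance by $\lambda$ alone. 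Multiplying by elements of $N_{Z/F}(Z^*)$ does not repair this: it only replaces $\lambda$ by another representative of its class modulo $N_{Z/F}(Z^*)$ and leaves the extension-finding problem unchanged. What is actually needed (and what \cite{PS0} supplies) is a common-value/linkage argument: using $u(F)=8$ one makes a quadratic form of dimension $>8$, built simultaneously from $\langle 1,-\lambda\rangle$ and from a form representing the relevant invariant of $h$, isotropic, and extracts from a common value an extension $L$ adapted to both $\lambda$ and $h$ at once. Note also that the trivial-discriminant hypothesis must enter essentially at this point --- Theorem \ref{A} of this paper shows the conclusion is false without it --- whereas in your sketch it is mentioned only in passing. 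As written, your argument establishes $N_{Z/F}(Z^*)\subseteq\textrm{Hyp}(h)$ and nothing more.
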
 
\medskip 

For groups of type $C_n$ we have the following theorem (Theorem $6.1$, \cite{PS0}). 
\medskip 

\begin{theorem} Let $F$ be the function field of a smooth, geometrically integral curve over a $p$-adic field with $p\neq 2$. Let $(A,\sigma)$ be a central simple algebra over $F$ with a symplectic involution. Let $h$ be a hermitian form over $(A,\sigma)$ of even rank $2n$. Then for the adjoint group 
	$\textbf{\textrm{PGSp}}(M_{2n}(A),\sigma_h)$, the group of rational equivalence classes is trivial, i.e., 
	$$ 
	\textbf{\textrm{PGSp}}(M_{2n}(A),\sigma_h)(F)/R = (1). 
	$$ 
	In fact 
	$$ 
	G(h)=\textrm{Hyp}(h)=F^*. 
	$$ 
\end{theorem}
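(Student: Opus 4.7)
The plan is to invoke Merkurjev's Theorem \ref{Merkurjev} to reduce the statement to an equality of subgroups of $F^*$, and then verify that equality by showing $h$ becomes hyperbolic over every proper quadratic extension of $F$. For the symplectic involution $\sigma_h$, the center $K$ of $M_{2n}(A)$ equals its fixed field $E=F$, whence $NK^*=F^{*2}$; moreover $G_+(M_{2n}(A),\sigma_h)=G(h)$ by convention. Applying Theorem \ref{Merkurjev} therefore gives
\begin{equation*}
\textbf{\textrm{PGSp}}(M_{2n}(A),\sigma_h)(F)/R \;\simeq\; G(h)\big/\bigl(F^{*2}\cdot\textrm{Hyp}(h)\bigr).
\end{equation*}
Since $\textrm{Hyp}(h)\subseteq G(h)\subseteq F^*$, proving $\textrm{Hyp}(h)=F^*$ simultaneously yields $G(h)=F^*$ and the triviality of $\textbf{\textrm{PGSp}}(M_{2n}(A),\sigma_h)(F)/R$.

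The next reduction is to the claim: \emph{for every non-square $\alpha\in F^*$, the hermitian form $h$ becomes hyperbolic over $L=F(\sqrt\alpha)$.} Once this is known, $-\alpha=N_{L/F}(\sqrt\alpha)\in\textrm{Hyp}(h)$ for each such $\alpha$. Because $F$ is not quadratically closed, the subgroup of $F^*$ generated by $F^{*2}$ together with $\{-\alpha:\alpha\in F^*\setminus F^{*2}\}$ coincides with $F^*$, so $\textrm{Hyp}(h)=F^*$ follows.

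To prove the claim, I would use Morita equivalence to reduce to the case $A=D$, a central division algebra. Since $\sigma|_D$ is symplectic, $D$ has exponent dividing $2$, and Saltman's period-index bound for function fields of $p$-adic curves forces the index of $D$ to lie in $\{1,2,4\}$; thus $D$ is $F$, a quaternion algebra $Q$, or a biquaternion algebra. Attach to $h$ its quadratic trace form $q_h$ on the underlying $F$-vector space; $q_h$ is hyperbolic if and only if $h$ is. When $D=Q$ with canonical involution and $h=\langle a_1,\dots,a_{2n}\rangle$, one has $q_h\simeq\langle a_1,\dots,a_{2n}\rangle\otimes n_Q\in I^2(F)$, where $n_Q$ is the norm $2$-fold Pfister form of $Q$; hence
\begin{equation*}
q_h\otimes\langle 1,-\alpha\rangle \;=\; \sum_{i=1}^{2n} a_i\,\langle\langle\alpha\rangle\rangle\otimes n_Q \;\in\; I^3(F).
\end{equation*}
Since $\langle a_i\rangle\equiv\langle 1\rangle\pmod{I(F)}$, each summand agrees modulo $I^4(F)$ with the fixed $3$-fold Pfister form $\pi=\langle\langle\alpha\rangle\rangle\otimes n_Q$; and $\pi\perp\pi\equiv 0\pmod{I^4(F)}$, so the sum of $2n$ such identical classes lies in $I^4(F)$. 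By the Arason--Pfister Hauptsatz together with $u(F)=8$, one has $I^4(F)=0$, which forces $q_h\otimes\langle 1,-\alpha\rangle=0$ in $W(F)$; thus $h_L$ is hyperbolic, as desired.

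The main technical obstacle is to execute the corresponding computation in the biquaternion case, where $D=Q_1\otimes Q_2$ carries a symplectic involution of the form $\gamma_1\otimes\tau_2$ (with $\gamma_1$ canonical and $\tau_2$ orthogonal). There, one must identify $q_h$ as an $\langle a_1,\dots,a_{2n}\rangle$-scaled multiple of a fixed Pfister form of sufficient degree so that $q_h\otimes\langle 1,-\alpha\rangle$ again lands in $I^4(F)=0$. Once this structural identification is in place, the same parity-of-$2n$ argument closes the proof exactly as in the quaternion case.
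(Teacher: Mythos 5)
There is a genuine gap, and in fact the central step of your argument is incorrect. First, a contextual point: the paper does not prove this statement at all --- it is quoted verbatim from Theorem $6.1$ of \cite{PS0} as a known input --- so the only guide in the paper to the intended technique is Theorem \ref{C}, which handles the odd-rank biquaternion case by a norm-transfer argument: for each $\lambda\in F^*$ one uses the surjectivity of the spinor norm map of the Albert form to produce a finite extension $L$ (depending on $\lambda$) over which the algebra drops to index $\leq 2$ and such that $\lambda\in N_{L/F}(L^*)$, and then one quotes the lower-index case over $L$. Your strategy is structurally different: you try to show that $h$ becomes hyperbolic over \emph{every} quadratic extension of $F$.

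That claim is false, and the step where you derive it is a non sequitur. From $q_h\in I^3(F)$ and $I^4(F)=0$ you correctly get $q_h\otimes\langle 1,-\alpha\rangle=0$ in $W(F)$; but this says only that $\alpha$ is a similarity factor of $q_h$, i.e.\ $\alpha\in G(q_h)$. Hyperbolicity of $(q_h)_{F(\sqrt{\alpha})}$ is the statement $q_h\in\langle\langle\alpha\rangle\rangle\cdot W(F)$, which is a different condition (compare $\langle 1,-2\rangle$ over $\mathbb{Q}$ and $\alpha=7$: the product is Witt-trivial, yet $\langle 1,-2\rangle$ stays anisotropic over $\mathbb{Q}(\sqrt{7})$). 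Concretely, take $Q=(b,t)$ over $F=\mathbb{Q}_p(t)$ as in the paper's proof of Theorem \ref{A} and $h=\langle 1,-p\rangle$; then $q_h=\langle\langle p,b,t\rangle\rangle$ is an anisotropic $3$-fold Pfister form, and $h_{F(\sqrt{\alpha})}$ is hyperbolic only when $\alpha$ is represented by its $7$-dimensional pure part, which is not a universal form (a residue computation at the Gauss valuation over $p$ exhibits explicit $\alpha$, such as $1-t$ for suitable $p$, not represented). So the set of quadratic extensions hyperbolizing $h$ is genuinely restricted, your generating-set argument for $\textrm{Hyp}(h)=F^*$ does not get off the ground, and what your computation actually proves is $G(q_h)=F^*$ --- i.e.\ $G(h)=F^*$ --- whereas Merkurjev's Theorem \ref{Merkurjev} requires you to control $\textrm{Hyp}(h)$, which is exactly the hard part. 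Finally, the biquaternion case, which you flag as the ``main technical obstacle,'' is left entirely unexecuted, and the hoped-for identification of $q_h$ as a diagonal form times a fixed Pfister form does not hold there; this is precisely where the Albert-form/spinor-norm transfer of Theorem \ref{C} (and of \cite{PS0}) is needed.
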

\medskip 

For groups of type $D_n$, we have the following theorem (Theorem $7.2$, \cite{PS0}). 
\medskip

\begin{theorem}\label{Ortho} Let $F$ be the function field of a 
	smooth, geometrically integral curve over a $p$-adic field with $p\neq 2$. Let $(A,\sigma)$ be a central simple algebra over $F$ with an orthogonal involution. Let $h$ be a hermitian form over $(A,\sigma)$ of even rank $2n$, trivial discriminant and trivial Clifford invariant. Then for the adjoint group 
	$\textbf{\textrm{PGO}}_+(M_{2n}(A),\sigma_h)$, 
	the group of rational equivalence classes is trivial, i.e., 
	$$ 
	\textbf{\textrm{PGO}}_+(M_{2n}(A),\sigma_h)(F)/R = (1). 
	$$ 
	In fact, 
	$$ 
	G_+(h)=\textrm{Hyp}(h)=F^*. 
	$$ 
\end{theorem}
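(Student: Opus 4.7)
The plan is to invoke Merkurjev's Theorem \ref{Merkurjev} to translate the R-triviality into an equality of subgroups of $F^*$. Since $\sigma$ is orthogonal, $K=E=F$ and $NK^*=F^{*2}$, so the theorem reads
$$
\textbf{\textrm{PGO}}_+(M_{2n}(A),\sigma_h)(F)/R \;\simeq\; G_+(h)\big/\big(F^{*2}\cdot\textrm{Hyp}(h)\big).
$$
The stronger assertion $G_+(h)=\textrm{Hyp}(h)=F^*$ certainly implies triviality of this quotient, and it is the statement we aim to prove directly.

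The next input is the structure of $h$. The trivial discriminant and trivial Clifford invariant place $h$, viewed in the appropriate hermitian Witt group of $(A,\sigma)$, in the third power of the fundamental ideal: in the split case $A=F$ this is $I^3 W(F)$, while in the quaternion case the Clifford invariant controls the third power of the fundamental ideal of $W(D,\sigma)$ modulo the Brauer class of $A$. Combined with the Parimala--Suresh bound $u(F)=8$ and the input $cd_2(F)\le 3$ (so that $I^4 W(F)=0$ by the Milnor conjecture), one can show that $h$ is Witt equivalent to a sum of 3-fold Pfister-type components with anisotropic kernel of bounded dimension.

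From such a decomposition I would establish $\textrm{Hyp}(h)=F^*$ by constructing, for an arbitrary $\lambda\in F^*$, a tower of finite extensions $M_i/F$ over which successive Pfister components of $h$ become hyperbolic and whose norm groups multiplicatively account for $\lambda$. The natural candidates are the quadratic extensions splitting individual slots of the Pfister components, together with the function fields of the associated Pfister quadrics, where the $\textrm{Hyp}$ construction picks up large norm subgroups. For $G_+(h)=F^*$ I would use that every 3-fold Pfister form $\phi$ is round, so $G(\phi)=D(\phi)$, and transfer this universality to $h$ via its decomposition, checking properness of the resulting similitudes via the spinor norm.

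The main obstacle is the non-split orthogonal case $(A,\sigma)=(D,\sigma)$ with $D$ a quaternion division algebra: the Pfister-form structure theory of the hermitian Witt group $W(D,\sigma)$ is more delicate than the quadratic case, and the Clifford invariant is only defined modulo the class $[D]$. Setting up the correct analogue of a ``3-fold Pfister form'' in this hermitian setting, and verifying that the resulting splitting extensions furnish norms exhausting $F^*$, is the heart of the argument; once this is in place, the remainder is careful bookkeeping with $u(F)=8$ and the vanishing of $H^4(F,\mathbb{Z}/2)$.
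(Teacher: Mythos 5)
First, a point of reference: the paper you are working from does not prove this statement at all --- Theorem \ref{Ortho} is recalled verbatim from the authors' earlier work (Theorem $7.2$ of \cite{PS0}) and is used here only as an input. So there is no internal proof to match your argument against; your proposal has to stand on its own as a reconstruction of the proof in \cite{PS0}.

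As it stands it does not. Your opening reduction via Theorem \ref{Merkurjev} to the statement $G_+(h)=\textrm{Hyp}(h)=F^*$ is correct and is indeed the standard first step. But everything after that is a plan rather than an argument, and the two load-bearing steps are missing. (i) You assert that trivial discriminant and trivial Clifford invariant, together with $u(F)=8$ and $I^4(F)=0$, force $h$ to be Witt equivalent to a sum of ``$3$-fold Pfister-type components,'' but you give no such structure theorem for $W(A,\sigma)$ when $A$ is not split, and you yourself flag this as ``the heart of the argument'' and leave it open. Note also that an orthogonal involution only forces $\exp(A)\leq 2$, so over $F$ the index of $A$ can be $4$; the Bartels Clifford invariant you invoke is set up for $(D,\sigma)$ with $D$ quaternion or split, so the biquaternion case needs either a separate treatment or an explicit reduction (as in the authors' handling of type $C_n$ in Theorem \ref{C}, via passage to a splitting field of an Albert form and Morita equivalence). (ii) Even granting a Pfister decomposition, the claim $\textrm{Hyp}(h)=F^*$ requires, for each $\lambda\in F^*$, a concrete production of finite extensions $M/F$ with $h_M$ hyperbolic whose norm groups jointly contain $\lambda$; ``towers of extensions whose norm groups multiplicatively account for $\lambda$'' names the desired conclusion, not a mechanism. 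The mechanism actually used throughout this circle of results (compare Proposition \ref{disc-hyp} and Theorem \ref{C} in the present paper) is to exploit $u(F)=8$ to show that a specific auxiliary quadratic form built from $\lambda$ and the invariants of $h$ is isotropic, which simultaneously yields $\lambda\in N_{L/F}(L^*)$ and the hyperbolicity of $h_L$; nothing of this sort appears in your sketch. Finally, $G_+(h)=F^*$ does not follow from roundness of $3$-fold Pfister forms alone: you must transfer multipliers from the quadratic (or split) picture back to similitudes of $(M_{2n}(A),\sigma_h)$ and verify properness, and the inclusion $\textrm{Hyp}(h)\subset G_+(h)$ is what ultimately closes the argument. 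In short, the strategy is pointed in a reasonable direction, but the proof has not been carried out.
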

\medskip 
	
\bigskip

	\section{Adjoint groups of type ${\ }^2A_n$} 
	
	Let $F$ be the function field of a smooth, geometrically integral curve over a $p$-adic field with $p\neq 2$. In this section we show that the hypothesis of Theorem $5.3$, \cite{PS0} is necessary. 
	For every positive integer $n \geq 2$ we give examples of
        absolutely simple adjoint algebraic groups $G$ of type $^2A_{2n-1}$ over the rational function field $F$ over ${\mathbb Q}_p$ with $p \neq 2$ such that the group of rational equivalence classes over $F$
        is non-trivial, i.e., $G(F)/R\neq (1)$.
	
	For a central simple algebra $(B,\tau)$ of even degree with an unitary $Z/F$ involution, let $D(B,\tau)$ denote its \emph{discriminant algebra} over $F$. Recall that $D(B,\tau)$ has a canonical involution of first kind  (see \S 10, \cite{KMRT}) for details on the discriminant algebra. We start with the following proposition. 
	\medskip 
	
	\begin{proposition}\label{disc-hyp}
		Let $F$ be the function field of a smooth, geometrically integral curve over a $p$-adic field with $p\neq 2$. Let $Z/F$ be a quadratic field extension and $(Q,\tau)$ a quaternion algebra over $Z$ with unitary $Z/F$ involution. 
		 Let $h$ be a non-degenerate  hermitian form of even rank $2n$ over $(Q,\tau)$. Then for the associated discriminant algebra $D(M_{2n}(Q),\tau_h)$ we have 
		\[
		\Nrd(D(M_{2n}(Q),\tau_h))=\textrm {Hyp}(h).
		\]
	\end{proposition}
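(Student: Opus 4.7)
My plan is to establish the two inclusions $\textrm{Hyp}(h)\subseteq\Nrd(D(M_{2n}(Q),\tau_h))$ and $\Nrd(D(M_{2n}(Q),\tau_h))\subseteq\textrm{Hyp}(h)$ separately, exploiting the close link between hyperbolicity of the unitary involution $\tau_h$ over a field extension and the splitting behaviour of the associated discriminant algebra.

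For the forward inclusion, I would take a finite field extension $M/F$ over which $\tau_h$ becomes hyperbolic, and use the fact (see KMRT, \S 10) that when a unitary involution of even degree is hyperbolic over a field, the associated discriminant algebra splits over that field. Hence $D(M_{2n}(Q),\tau_h)_M$ is split and its reduced norm is surjective onto $M^*$. Applying the standard norm principle for reduced norms, namely $N_{M/F}(\Nrd_{A_M}(A_M^*))\subseteq\Nrd(A)$ for any central simple $F$-algebra $A$, I obtain $N_{M/F}(M^*)\subseteq\Nrd(D(M_{2n}(Q),\tau_h))$. Since $\textrm{Hyp}(h)$ is generated by exactly such norms, the first inclusion follows.

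For the reverse inclusion, I plan to start with any $\alpha\in\Nrd(D(M_{2n}(Q),\tau_h))$, realise $\alpha$ as a field norm from a subfield $M\subseteq D(M_{2n}(Q),\tau_h)$, and argue that $M$ is a hyperbolic extension for $\tau_h$. Since any maximal subfield of $D(M_{2n}(Q),\tau_h)$ splits the algebra, this reduces to the converse implication: ``$D(M_{2n}(Q),\tau_h)$ splits over $M$'' should force ``$\tau_h$ becomes hyperbolic over $M$''. Establishing this step will exploit the specific arithmetic of $F$, in particular $\mathrm{cd}(F)\le 3$, $u(F)=8$ by Parimala--Suresh, and the resulting structural constraints on the Brauer group of $F$ and the Witt group of hermitian forms over $(Q,\tau)$.

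The main obstacle is precisely this converse implication needed for the reverse inclusion, as splitting of the discriminant algebra does not in general force hyperbolicity of the unitary involution over an arbitrary field. Overcoming it will likely require a careful analysis of residues of $h$ and of $D(M_{2n}(Q),\tau_h)$ at the discrete valuations of $F$ coming from closed points of the underlying $p$-adic curve, patching local hyperbolicity information into the global statement in the spirit of the techniques used to prove $u(F)=8$.
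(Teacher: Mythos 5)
Your forward inclusion $\textrm{Hyp}(h)\subseteq\Nrd(D(M_{2n}(Q),\tau_h))$ is essentially the paper's argument (the paper deduces splitting of the discriminant algebra from the triviality of $\disc(h_M)$ after computing the Brauer class, rather than quoting the general fact about hyperbolic unitary involutions, but both routes work and the paper notes the inclusion is already in \cite{BMT}). The reverse inclusion, however, has a genuine gap, and it is exactly where you flag your ``main obstacle.'' You propose to show that splitting of $D(M_{2n}(Q),\tau_h)$ over $M$ forces $\tau_h$ to become \emph{hyperbolic} over $M$. That implication is both stronger than what is needed and not true in general: a hermitian form of even rank with trivial discriminant over $(Q,\tau)$ need not be hyperbolic (its higher invariants, e.g.\ the Rost invariant, can be nonzero), and no amount of residue/patching analysis will rescue a false intermediate statement. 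Your plan for this half is therefore not a proof but a restatement of the difficulty.

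The paper's route avoids this entirely. First it pins down the Brauer class: $D(M_{2n}(Q),\tau_h)\sim(d,\disc(h))_F$ where $Z=F(\sqrt d)$, proved via corollary $10.35$ of \cite{KMRT} in the split case and by passing to the function field $K$ of the Weil transfer of the Severi--Brauer variety together with injectivity of $Br(F)\to Br(K)$ in the nonsplit case. Then, for a finite extension $L/F$ splitting the discriminant algebra, one only concludes that $\disc(h_L)$ is trivial in $L^*/N_{LZ/L}((LZ)^*)$; at that point Theorem $5.3$ of \cite{PS0} (even rank, trivial discriminant over the function field of a $p$-adic curve) gives $\textrm{Hyp}(h_L)=L^*$ --- a much weaker statement than hyperbolicity of $h_L$ --- and the tower property $N_{L/F}(\textrm{Hyp}(h_L))\subseteq\textrm{Hyp}(h)$ yields $N_{L/F}(L^*)\subseteq\textrm{Hyp}(h)$, hence $\Nrd(D(M_{2n}(Q),\tau_h))\subseteq\textrm{Hyp}(h)$ since reduced norms are generated by norms from splitting fields. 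You should replace your hyperbolicity claim with this appeal to the predecessor paper's theorem; you will also need the explicit Brauer-class computation of the discriminant algebra, which your proposal never establishes and on which both inclusions in the paper's proof rest.
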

	\begin{proof}
		Let $d\in F^*$ be such that $Z=F(\sqrt{d})$. We claim that $D(M_{2n}(Q),\tau_h)\sim (d,\disc(h))_F$. If $Q$ is split then by (corollary $10.35$, \cite{KMRT}) we have $D(M_{2n}(Q),\tau_h)\sim (d,\disc(h))_F$. If 
		$Q$ is non-split let $K=F(R_{Z/F}(SB(M_{2n}(Q))))$ be the function field  of the Weil transfer of the Severi-Brauer variety of $M_{2n}(Q)$. Then $M_{2n} (Q)_K$ is split over $K$. Therefore $D(M_{2n}(Q),\tau_h)_K \sim (d,\disc(h))_K$. 
	As the map $Br(F)\rightarrow Br(K)$ is injective (see corollary $2.12$, \cite{MT}), we have $D(M_{2n}(Q),\tau_h)\sim (d,\disc(h))_F$.

	To prove the inclusion
        $\Nrd(D(M_{2n} (Q), \tau_h)) \subset
                        \textrm {Hyp}(h)$, we use results of \cite{PS0}. 
                        Let $L/F$ be a finite field extension such that $D(M_{2n}(Q),\tau_h)$ is split over $L$. Then $(d,\disc(h))$ splits over $L$, which implies $\disc(h)\in N_{LZ/L}((LZ)^*)$. Thus, over $L$, 
		$h$ has even rank and trivial discriminant in $L^*/N_{LZ/L}((LZ)^*)$. By Theorem $5.3$, \cite{PS0}, $\textrm{Hyp}(h_{L})=L^*$. Hence $N_{L/F}(L^*)=N_{L/F}(\textrm{Hyp}(h_L))\subset \textrm{Hyp}(h)$. Thus, $\Nrd (D(M_{2n}(Q),\tau_h))\subset \textrm{Hyp}(h)$. 
		\medskip

                Conversely, if $h$ is hyperbolic over a finite field extension $M$ of $F$ then, $h_M$ has trivial discriminant. Therefore,  $D(M_{2n}(Q),\tau_h) \sim (d,\disc(h))$ splits over $M$. Hence, $\textrm{Hyp}(h)\subset\Nrd(D(M_{2n}(Q),\tau_h))$. This inclusion has been proved in \cite{BMT}.

	\end{proof} 
	\medskip
	\smallskip 
\noindent 
	{\bf Proof of Theorem \ref{A} : } 

Let $F=\mathbb{Q}_p(t)$ with $p\neq 2$. Let $b\in\mathbb{Z}_p^*$ be a non-square unit. Then $\langle\langle b,p\rangle\rangle$ is an anisotropic $2$-fold Pfister form over $\mathbb{Q}_p$ (see VI, $2.2$, \cite{L}).
Let $H=(b,t)_F$ be a quaternion algebra over $F$. 
\medskip

Let $K=\mathbb{Q}_p(\sqrt{p})$. As $K$ is a totally ramified field extension of $\mathbb{Q}_p$, the residue field of $K$ is the same as the residue field of $\mathbb{Q}_p$. Hence by Hensel's lemma, $b\notin K^{*2}$.
By taking residues in the Laurent series field $K((t))$ and using
($1.9$, chapter VI, \cite{L}) we see that $H$ does not split over
$F({\sqrt{p}})$, i.e., the norm form of $H$, $n_H = \langle 1, -t,-b,tb\rangle$ is anisotropic over $F({\sqrt{p}})$. 

As $b$ is a unit in $\mathbb{Z}_p^* $, by (chapter VI, $2.5$, \cite{L}),  
  $(-1,b)$ splits over $\mathbb{Q}_p$. Hence $\langle 1,1\rangle\cdot n_H=0$ in the Witt group $W(F)$ of $F$. Hence $-1\in\Nrd(H)$ (cf. chapter III, $2.3$ and $2.4'$, \cite{L}). Let $u\in H^*$ be such that $-1=\Nrd(u)$. We have the following two cases.
\medskip

\noindent{\bf case.1.} $n=2m$. We choose a quaternion basis $1,i,j,ij$ of $H$ such that $i$ commutes with $u$. So $i^2 = b$, $j^2 = t$, $i \cdot j = -j \cdot i$ and $i \cdot u = u \cdot i$. Now consider the involution 
\[\sigma=\textrm{Int}\begin{pmatrix}
j/t & 0 & \dots  & 0 \\
0   & i/b& \dots  & 0 \\
\vdots & \vdots   & \ddots \\
0 & 0 & \dots  & i/b
\end{pmatrix} \circ ^-t
\]
on $M_{2m}(H)$, where $^-$ denotes the canonical involution on $H$ and $t$ denotes the transpose of a matrix.
\medskip

As $\sigma (\textrm{diag}(j,i,\cdots, i))
= -\textrm{diag}(j,i, \cdots, i)$, $\sigma$ is an orthogonal involution and \[\disc(\sigma)=\Nrd(j)\cdot\Nrd(i)\equiv t\cdot b \not\equiv 1 ~(\textrm{mod }F^{*2}). \]
\medskip

Let $(B,\tau)=(M_{2m}(H)\otimes_F F(\sqrt{p}),\sigma\otimes\gamma)$, where $\sigma$ is the orthogonal involution constructed above and $\gamma$ is the non-trivial automorphism of $F(\sqrt{p})/F$. So $(B,\tau)$ is a central simple algebra over $F(\sqrt{p})$ 
with an unitary $F(\sqrt{p})/F$ involution $\tau$. 
Let $g=\textrm{diag}(j,uj, \cdots, uj)$, where $u\in H^*$ is the element chosen above. As $i \cdot u = u \cdot i$, we have $\sigma(g)=\textrm{diag}(-j, j\bar{u}, \cdots, j\bar{u})$. Hence 
\[ \mu(g)= \sigma(g)\cdot g = -t .\]
Thus $-t\in G(B,\tau)$. 
\medskip

By (proposition $10.33$, \cite{KMRT}), as $D(B,\tau) \sim (p,\disc(\sigma))_F=(p,t\cdot b)_F$ we have

\begin{align*}
\mu(g)\cup D(B,\tau)& = (-t)\cup (p)\cup(t\cdot b) & \text{in } H^3(F,\mu_2)\\
&= (-t)\cup (p)\cup (b) &\text{since } \langle\langle -t,t\cdot b\rangle\rangle\simeq\langle\langle -t,b\rangle\rangle\\
&= (t)\cup (p)\cup (b) & \text{since } (-1,b)_F \text{ is split} 
\end{align*}

We claim that $(t)\cup(p)\cup(b)\neq 0\in H^3(F,\mu_2)$. If $(t)\cup(p)\cup(b)=0\in H^3(F,\mu_2)$ then $\langle 1,-t\rangle\cdot\langle\langle p,b\rangle\rangle=0\in I^3(F)$. But by taking residues in $\mathbb{Q}_p ((t))$ and noting
that $\langle \langle b,p\rangle\rangle$ is anisotropic over $\mathbb{Q}_p$
we have $\langle 1, -t\rangle \cdot \langle \langle p,b\rangle\rangle
\not= 0$. 
Thus, $(t)\cup (p)\cup (b)\neq 0$. Hence $\mu(g)\notin\Nrd (D(B,\tau))$ (see chapter III, $2.3$ and $2.4'$, \cite{L}). By proposition \ref{disc-hyp} above, $\mu(g)\notin Hyp(B,\tau)$. Hence by (\S $2$, \cite{M}), $\textrm{\textbf{PGU}}(B,\tau)(F)/R\neq (1)$.

\medskip

\noindent{\bf case.2.} $n=2m+1$. We choose a quaternion basis $1,i,j,ij$ of $H$ such that $j$ commutes with $u$. So $i^2 = b$, $j^2 = t$, $i \cdot j
= -j \cdot i$ and $j \cdot u = u \cdot j$. 
Now consider the involution $\sigma$,

\[\sigma=\textrm{Int}\begin{pmatrix}
i & 0 & \dots  & 0 \\
0   & j& \dots  & 0 \\ 
\vdots & \vdots   & \ddots \\
0 & 0 & \dots  & j
\end{pmatrix} \circ ^-t
\]
on $M_{2m+1}(H)$ with $m\geq 1$, where $^-$ denotes the canonical involution on $H$ and $t$ denotes the transpose of a matrix.
\medskip

As $\sigma (\textrm{diag}(i,j,\cdots, j))
        = -\textrm{diag}(i,j, \cdots, j)$, $\sigma$ is an orthogonal involution and $\disc(\sigma)=(-1)^{2m+1}\cdot\Nrd(i)\cdot\Nrd(j)^{2m}\equiv b ~(\textrm{mod }F^{*2})$.
	\medskip
	
	Let $(B,\tau)=(M_{2m+1}(H)\otimes_F F(\sqrt{p}),\sigma\otimes\gamma)$, where $\sigma$ is the orthogonal involution constructed above and $\gamma$ is the non-trivial automorphism of $F(\sqrt{p})/F$. So $(B,\tau)$ is a central simple algebra over $F(\sqrt{p})$ 
	with an unitary $F(\sqrt{p})/F$ involution $\tau$. 
	Let $g=\textrm{diag}(j,j\bar{u}, \cdots, j\bar{u})$, where $u\in H^*$ is the element chosen above. As $\sigma(g)=\textrm{diag}(j, -ju, \cdots, -ju)$, we have  
        \[ \mu(g)= \sigma(g)\cdot g = t. \] 
	Thus $t\in G(B,\tau)$. 
	\medskip
	
	Further, by (proposition $10.33$, \cite{KMRT}), 
	\begin{align*}
	D(B,\tau) &\sim \lambda^{2m+1} M_{2m+1}(H)\otimes_F(p,\disc(\sigma))_F\\
	& \sim H\otimes_F (p,b)\\
	& \sim (b,t)\otimes_F (p,b)\\
	& \sim (b,t\cdot p).
	\end{align*} 

Therefore, we have
\begin{align*}
\mu(g)\cup D(B,\tau)& = (t)\cup (b)\cup(t\cdot p) & \text{in } H^3(F,\mu_2)\\
&= (t)\cup (b)\cup (-p) &\text{since } \langle\langle t,t\cdot p\rangle\rangle\simeq\langle\langle t,-p\rangle\rangle\\
&= (t)\cup (b)\cup (p) & \text{since } (-1,b)_F \text{ is split} 
\end{align*}

Hence arguing exactly as in case.1.), we have,
$\mu (g) \cup D(B, \tau) \not= 0$ in
$H^3(F, \mu_2)$. 
Hence $\mu(g)\notin\Nrd (D(B,\tau))$ (see chapter III, $2.3$ and $2.4'$, \cite{L}). By (lemma $10$, \cite{BMT}), $\mu(g)\notin Hyp(B,\tau)$. Hence by (\S $2$, \cite{M}), $\textrm{\textbf{PGU}}(B,\tau)(F)/R\neq (1)$.
	\medskip
	
	In view of the above (Theorem \ref{A}) we get the following result.
	
	\begin{corollary}
		Let $F$ be the rational function field in one variable over a $p$-adic field with $p \not= 2$. For each positive integer $n \geq 2$, there exists an absolutely simple adjoint algebraic group of type $^2A_{2n-1}$ defined over $F$ which is non-rational. 
	\end{corollary}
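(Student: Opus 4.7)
The plan is to deduce the corollary directly from Theorem \ref{A} by invoking the standard birational invariance property of R-equivalence. Recall that for any smooth geometrically integral variety $X$ over an infinite field $F$, R-equivalence on $X(F)$ depends only on the stable birational class of $X$; in particular, if $X$ is $F$-rational (birational to affine space), then $X(F)/R$ reduces to a single class, because any two $F$-points of $\mathbb{A}^n_F$ are joined by the line through them. Applied to a connected algebraic group $G$, this means: if $G$ is rational over $F$, then the group $G(F)/R$ is trivial.

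First I would fix an integer $n \geq 2$ and apply Theorem \ref{A} to obtain an absolutely simple adjoint algebraic group $G$ of type $^2A_{2n-1}$ defined over $F = \mathbb{Q}_p(t)$ such that $G(F)/R \neq (1)$. Since $\mathbb{Q}_p(t)$ is an infinite field, the principle recalled above applies without restriction.

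Next I would argue by contrapositive: if $G$ were an $F$-rational variety, then $G(F)/R$ would necessarily be trivial, contradicting the conclusion of Theorem \ref{A}. Hence the group $G$ produced by Theorem \ref{A} cannot be rational over $F$, which gives the desired example for each $n \geq 2$.

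There is essentially no obstacle in this argument, as the entire technical content has already been packaged into the proof of Theorem \ref{A}; the corollary is a formal consequence via the R-equivalence obstruction to rationality. The only care required is in citing the rationality-implies-trivial-R-equivalence implication at the level of connected linear algebraic groups over an infinite field, which is classical and goes back to Colliot-Th\'el\`ene and Sansuc and is used in the same form in \cite{M} and \cite{PS0}.
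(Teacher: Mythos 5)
Your proposal is correct and follows the same route as the paper: take the group $\textrm{\textbf{PGU}}(B,\tau)$ constructed in Theorem \ref{A}, note that its group of R-equivalence classes is non-trivial, and conclude non-rationality from the fact that a rational group has trivial $G(F)/R$ (the paper cites proposition $1$ of \cite{M} for exactly this implication). The only cosmetic difference is that you spell out the birational-invariance argument rather than citing it.
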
 
	\begin{proof}
    For $F$ as in the corollary, let $(B,\tau)$ be a central simple algebra constructed as in the Theorem \ref{A} above. We have seen in Theorem \ref{A}, that the group of rational equivalence classes of $\textrm{\textbf{PGU}}(B,\tau)(F)/R$ is non-trivial. Hence the group $\textrm{\textbf{PGU}}(B,\tau)$ is not rational (proposition $1$, \cite{M}).
    \end{proof}

	\medskip
	
	\section{Adjoint groups of type $C_n$}
	Let $F$ be the function field of a smooth, geometrically integral curve over a $p$-adic field with $p\neq 2$. In this section we extend (Theorem $6.1$, \cite{PS0}) to an arbitraty group of type $C_n$ over $F$. We start with the following. 
	
	\begin{theorem}\label{C}
		Let $F$ be the function field of a smooth, geometrically integral curve over a $p$-adic field with $p\neq 2$. Let $(Q_1\otimes_F Q_2,\sigma)$ be a biquaternion division algebra over $F$ with symplectic involution. Let $h$ be a hermitian form over $(Q_1\otimes_F Q_2,\sigma)$ of odd rank $r$. Then for the adjoint group $\textrm{\textbf{PGSp}}(M_r(Q_1\otimes Q_2),\sigma_h)$, the group of rational equivalence classes is trivial, i.e., 
		\[
		\textrm{\textbf{PGSp}}(M_r(Q_1\otimes_F Q_2),\sigma_h)(F)/R = (1).
		\]
		In fact, \[G(h)=\textrm{Hyp}(h)=F^*.\]
	\end{theorem}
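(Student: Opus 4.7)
By Merkurjev's theorem (Theorem~\ref{Merkurjev}) specialized to symplectic involutions (so $NK^{\ast}=F^{\ast 2}$ and $G_{+}=G$), one has
\[
\textbf{\textrm{PGSp}}\bigl(M_{r}(Q_{1}\otimes_{F}Q_{2}),\sigma_{h}\bigr)(F)/R \;\simeq\; G(h)/\bigl(F^{\ast 2}\cdot \textrm{Hyp}(h)\bigr),
\]
so the claimed triviality will follow from the stronger ``in fact'' equalities $G(h)=\textrm{Hyp}(h)=F^{\ast}$. The plan is to peel off a rank-one summand from $h$, handle the remaining even-rank piece using Theorem~$6.1$ of \cite{PS0}, and thereby reduce the computation to the biquaternion algebra $(Q_{1}\otimes_{F}Q_{2},\sigma)$ itself.

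First I would diagonalize $h\cong \langle a\rangle\perp h_{0}$ with $a\in F^{\ast}$ and $h_{0}$ a hermitian form of even rank $r-1$. Theorem~$6.1$ of \cite{PS0} applied to $h_{0}$ gives $G(h_{0})=\textrm{Hyp}(h_{0})=F^{\ast}$, so for every $\lambda\in F^{\ast}$, $\lambda h_{0}\cong h_{0}$ and hence $\lambda h\cong \langle \lambda a\rangle \perp h_{0}$. Witt cancellation for hermitian forms then yields $\lambda h\cong h\iff \langle \lambda a\rangle\cong \langle a\rangle$, which (using that $a\in F^{\ast}$ is central and $\sigma$-fixed) amounts to $\lambda=\sigma(u)u$ for some $u\in (Q_{1}\otimes_{F}Q_{2})^{\ast}$, i.e.\ $\lambda\in G(Q_{1}\otimes_{F}Q_{2},\sigma)$. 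Hence $G(h)=G(Q_{1}\otimes_{F}Q_{2},\sigma)$. The containment $\textrm{Hyp}(h)\supseteq \textrm{Hyp}(Q_{1}\otimes_{F}Q_{2},\sigma)$ is immediate: if $\sigma_{L}$ is hyperbolic on $(Q_{1}\otimes_{F}Q_{2})_{L}$ then the Witt group of hermitian forms over that algebra vanishes, so $h_{L}$ itself is hyperbolic. The problem therefore reduces to proving
\[
G(Q_{1}\otimes_{F}Q_{2},\sigma)=\textrm{Hyp}(Q_{1}\otimes_{F}Q_{2},\sigma)=F^{\ast}.
\]

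For $\textrm{Hyp}$, over any splitting field $L$ of $Q_{1}\otimes_{F}Q_{2}$ the involution $\sigma_{L}$ on $(Q_{1}\otimes_{F}Q_{2})_{L}\cong M_{4}(L)$ is the adjoint of a non-degenerate alternating form on $L^{4}$ and hence hyperbolic, whence $\textrm{Hyp}(Q_{1}\otimes_{F}Q_{2},\sigma)\supseteq \textrm{Nrd}\bigl((Q_{1}\otimes_{F}Q_{2})^{\ast}\bigr)$. For $G$ I would use the structural decomposition of symplectic biquaternion involutions: every such $\sigma$ is isomorphic to a tensor product $\sigma_{1}^{\prime}\otimes\tau$ with $\sigma_{1}^{\prime}$ an orthogonal involution on a quaternion subalgebra $Q_{1}^{\prime}\subset Q_{1}\otimes_{F}Q_{2}$ and $\tau$ the canonical (symplectic) involution on the centralizing quaternion $Q_{2}^{\prime}$. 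A decomposable similitude $u_{1}\otimes u_{2}$ has multiplier $\mu(u_{1})\cdot \textrm{Nrd}(u_{2})$, giving $G(Q_{1}\otimes_{F}Q_{2},\sigma)\supseteq G(Q_{1}^{\prime},\sigma_{1}^{\prime})\cdot \textrm{Nrd}(Q_{2}^{\prime\ast})$. The arithmetic of $F$ -- in particular $u(F)=8$ (Theorem~$4.6$ of \cite{PS}) and the bound $\textrm{cd}(F)\le 3$ -- is then invoked to force both $\textrm{Nrd}\bigl((Q_{1}\otimes_{F}Q_{2})^{\ast}\bigr)$ and $G(Q_{1}^{\prime},\sigma_{1}^{\prime})\cdot \textrm{Nrd}(Q_{2}^{\prime\ast})$ to equal $F^{\ast}$.

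The main obstacle is this last arithmetic input. Everything before it is general Witt-theoretic bookkeeping combined with Theorem~$6.1$ of \cite{PS0}; the delicate point is showing that the reduced-norm group of a biquaternion division algebra over the function field of a $p$-adic curve, together with the multipliers of an orthogonal involution on a quaternion subalgebra multiplied by the reduced norms of the complementary quaternion factor, exhausts $F^{\ast}$. Both equalities ultimately rest on the fact that the bound $u(F)=8$ forces enough isotropy in the relevant Pfister-type quadratic forms over $F$.
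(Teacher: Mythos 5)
Your plan has a genuine gap, and it sits exactly at the point you defer to ``the arithmetic of $F$.'' After your reduction, the equality $\textrm{Hyp}(h)=F^{*}$ is made to rest on $\textrm{Nrd}\bigl((Q_{1}\otimes_{F}Q_{2})^{*}\bigr)=F^{*}$, because the only extensions you feed into $\textrm{Hyp}(h)$ are those over which the biquaternion algebra splits (or over which $\sigma$ becomes hyperbolic). But reduced-norm surjectivity fails here in general: by the projection formula, $\textrm{Nrd}(D^{*})\subseteq\{\lambda\in F^{*}:(\lambda)\cup[D]=0\in H^{3}(F,\mu_{2})\}$, and over $F=\mathbb{Q}_{p}(t)$ there are division biquaternion algebras $D$ and scalars $\lambda$ with $(\lambda)\cup[D]\neq 0$ --- the nonvanishing triple symbols computed in Section 4 of this paper are exactly of this shape. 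So $\textrm{Nrd}(D^{*})$ is a proper subgroup of $F^{*}$ in general, and the containment $\textrm{Hyp}(h)\supseteq\textrm{Nrd}(D^{*})$ cannot close the argument. The paper's proof avoids this by enlarging the class of extensions: for any $\lambda\in F^{*}$ it uses $u(F)=8$ to show that the spinor norm group of the Albert form $q_{A}$ is all of $F^{*}$, hence produces a finite extension $L/F$ with $\lambda\in N_{L/F}(L^{*})$ over which $q_{A}$ is merely \emph{isotropic} (so $\textrm{ind}(D_{L})\leq 2$, not necessarily $1$); Morita theory then turns the odd-rank form over $D_{L}$ into an \emph{even}-rank form over a quaternion algebra, Theorem $6.1$ of \cite{PS0} applied over $L$ gives $\textrm{Hyp}(h_{L})=L^{*}$, and taking norms puts $\lambda$ into $\textrm{Hyp}(h)$. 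That two-step descent --- index drop to $2$ first, hyperbolicity only over a further extension --- is the idea your argument is missing.

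Two secondary points. First, a rank-one summand of $h$ is $\langle a\rangle$ with $a$ an invertible $\sigma$-symmetric element of the biquaternion algebra (a $6$-dimensional space), not an element of $F^{*}$; your cancellation step therefore reduces to $G(D,\textrm{Int}(a^{-1})\circ\sigma)$ for a possibly different symplectic involution, which is repairable but not what you wrote. Second, the claim that $\sigma_{L}$ hyperbolic forces $W(D_{L},\sigma_{L})=0$ is false: if $D_{L}\cong M_{2}(Q)$ with $Q$ a division quaternion algebra and $\sigma_{L}$ hyperbolic, the hermitian Witt group is Morita-equivalent to that of $(Q,\gamma)$ and is nonzero, so $h_{L}$ need not be hyperbolic and $\textrm{Hyp}(h)\supseteq\textrm{Hyp}(D,\sigma)$ is not justified as stated. (The weaker containment $\textrm{Hyp}(h)\supseteq\textrm{Nrd}(D^{*})$ does hold, via fully split extensions where $h_{L}$ becomes an alternating form, but as explained above it is not enough.)
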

	\begin{proof}
		Let $q_A$ be an Albert form for $Q_1\otimes_F Q_2$. So $q_A$ is a $6$-dimensional quadratic form over $F$ with trivial discriminant. As the $u$-invariant of $F$, $u(F)=8$ (see Theorem $4.6$, \cite{PS}), the group of spinor norms of $q_A$, $Sn(q_A)=F^*$ (see proof of the Theorem $4.1$, \cite{PS0}). Let $\lambda\in F^*$. Then $\lambda\in Sn(q_A)$. Thus there exists a finite field extension $L/F$ such that $q_A$ is isotropic over $L$ and $\lambda=N_{L/F}(x)$, for some $x\in L^*$. Over $L$, $Q_1\otimes Q_2\sim H$ for some quaternion algebra. By Morita correspondence $M_r(Q_1\otimes Q_2,\sigma_h)$ will correspond to ($M_{2r}(H),\sigma_{h_L})$. By (Theorem $6.1$, \cite{PS0}), $\textrm{Hyp}(h_L)=L^*$ and thus $x\in\textrm{Hyp}(h_L)$. Therefore, $\lambda=N_{L/F}(x)\in N_{L/F}(\textrm{Hyp}(h_L))\subset\textrm{Hyp}(h)$. Hence $F^*=\textrm{Hyp}(h)=G(h)$. Thus, the group of rational equivalence classes, $\textrm{\textbf{PGSp}}(M_r(Q_1\otimes_F Q_2),\sigma_h)(F)/R=(1)$.
	\end{proof}
	\medskip

        \noindent
        {\bf Proof of Theorem \ref{cor-C} :} 
	  Let $G$ be an absolutely simple adjoint algebraic group of type $C_n$ over $F$. By Weil's classification results (\cite{We}), the group $G$ is associated to a central simple algebra $A$ over $F$ with a symplectic involution. Thus $\textrm{exp}(A)\leq 2$ and by (corollary $2.2$, \cite{PS3}) its index, $\textrm{ind}(A)\leq 4$. The corollary follows by combining the above Theorem \ref{C} along with
          Theorem $6.1$ and corollary $6.2$ from \cite{PS0}. 

	\medskip
	
	\section{Adjoint groups of type $D_n$} 
	
	Let $F$ be the function field of a smooth, geometrically integral curve over a $p$-adic field with $p\neq 2$.
	For an adjoint classical group $G$ of type $D_n$ over $F$ we consider separately the cases when the associated hermitian form $h$ has trivial discriminant (that is, $G$ is of type $^1 D_n$) and $h$ has non-trivial discriminant (that is, 
	$G$ is of type $^2D_n$). 
	
	\subsection{Adjoint groups of type $^2D_n$}
	
	Let $F$ be the function field of a smooth, geometrically integral curve over a $p$-adic field with $p\neq 2$. 
	In this section we give an example of an adjoint group $G$ of type $^2D_3$ over a field $F$ for which the group of R-equivalence classes is not trivial, as mentioned in remark \ref{D} . Hence such a group $G$ is non-rational over $F$.  
	\medskip

        \begin{example} \label{DD} 
		Let $F=\mathbb{Q}_p(t)$ be a rational function field in one variable over $\mathbb{Q}_p$ with $p\neq 2$, where $t$ is an indeterminate. Let $p$ denote a uniformizing parameter of $\mathbb{Q}_p$ and $(p,u)$ be the unique quaternion division algebra over $\mathbb{Q}_p$. Let $Q_1=(p,u)\otimes_{\mathbb{Q}_p}F$, $Q_2=(t,u)$ and $Q=Q_1\otimes_F Q_2= (p\cdot t, u)$ be quaternion algebras over $F$. Let $~\bar{}~$ denote the canonical involution on $Q$. 
		Let $\sigma_{h'}$ be the adjoint involution on $M_2(Q)$ corresponding to the skew-hermitian form $h' := \langle 1,-p\rangle\cdot\langle j\rangle = \langle j, -pj\rangle$ over $(Q, \bar{}~)$. In other words, 
		$(M_2(Q), \sigma_{h'})
                = (M_2(F), \sigma_{\langle 1,-p \rangle}) \otimes
                (Q, \sigma_{\langle j \rangle})$, where $\sigma_{\langle 1,-p\rangle}$ and $\sigma_{\langle j\rangle}$ are the adjoint involutions corresponding to $\langle 1,-p\rangle$ and $\langle j\rangle$. 
		Note that $\sigma_{\langle 1,-p\rangle}$ and $\sigma_{\langle j\rangle}$ are orthogonal involutions on $M_2(F)$ and $Q$ respectively. Moreover, $\disc(\sigma_{\langle 1,-p\rangle})=p\in F^*/F^{*2}$ and $\disc(\sigma_{\langle j\rangle})= u\in F^*/F^{*2}$. Therefore, one of the components of the Clifford algebra $C(M_2(Q),\sigma_{h'})$ is  Brauer-equivalent to $(\disc(\sigma_{\langle 1,-p\rangle}),\disc(\sigma_{\langle j\rangle}))=(p,u)$ and the other one to $Q\otimes_F (p,u)\sim (t,u)$, (see Tao's result, page $150$, \cite{KMRT}).
		\medskip 
		
		Let $h :=  h' + <i> = \langle j,-pj,i\rangle$ be a skew-hermitian form over $(Q, \bar{}~)$ and let $\sigma_h$ be the corresponding adjoint involution. Thus, we have $\disc(\sigma_h)=p\cdot t$. Set $L=F(\sqrt{p\cdot t})$. 
		We show that the element $-p\cdot t$ is a non-trivial element in the  group $\textrm{\textbf{PSim}}_+(M_3(Q),\sigma_h)(F)/R$. Clearly $-p\cdot t$ belongs to $N_{L/F}(L^*)$. As $(-1, u)$ is split over $\mathbb{Q}_p$ we can find in $Q$ a pure quaternion $i^{\prime}$ with $i^{\prime 2} = -pt$ that anticommutes with $j$. Then $g = diag (i^{\prime}, i^{\prime}, i )$ is a similitude with multiplier $\mu(g) = -pt$. Hence $-pt \in G_+ (M_3(Q), \sigma_h)$. 
		\medskip
		
		Observe that $(-p\cdot t)\cup (p)\cup (u)= (t)\cup (p)\cup (u)$ and $(-p\cdot t)\cup (t)\cup (u)=(t)\cup (p)\cup (u)$. Hence $(-p\cdot t)\cup Q_i= (t)\cup (p)\cup (u)$, for $i=1,2$. We claim that $(t)\cup (p)\cup (u)\neq 0\in H^3(F,\mu_2)$. 
		Consider the Pfister form $q=\langle 1,-t\rangle\cdot\langle\langle p,u\rangle\rangle$ corresponding to the symbol $(t)\cup (p)\cup (u)$. Here we use the notation $\langle\langle a\rangle\rangle= \langle 1,-a\rangle$ for $a\in F^*$. We write $q=\langle\langle p,u\rangle\rangle \perp\langle -t\rangle\cdot\langle\langle p,u\rangle\rangle$. We consider the quadratic form $q$ in the field of formal Laurent series $\mathbb{Q}_p((t))$ with uniformizing parameter $t$. By (VI, $1.9$ (2), \cite{L}), $q$ is anisotropic over $\mathbb{Q}_p((t))$ as $\langle\langle p,u\rangle\rangle$ is anisotropic over $\mathbb{Q}_p$. Hence, $(-p\cdot t)\cup\Nrd(Q_i)=(t)\cup (p)\cup (u)\neq 0\in H^3(F,\mu_2)$, for $i=1,2$. By (chapter III, $2.3$ and $2.4'$, \cite{L}), $-p\cdot t\notin\Nrd(Q_i)$. Thus, $-p\cdot t$ is a non-trivial element in the group $\textrm{\textbf{PSim}}_+(M_3(Q),\sigma_h)(F)/R$ (by proposition $9$ \cite{M}). 
\end{example} 	
	\medskip 

        As an immediate consequence we have :

	\begin{theorem} 
		Let $F$ be the rational function field in one variable over a $p$-adic field with $p\neq 2$. Then there exists an absolutely simple adjoint algebraic group of type $^2D_3$ defined over $F$ which is non-rational. 
	\end{theorem}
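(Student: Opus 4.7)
The plan is to extract the non-rational group directly from Example \ref{DD} and verify its type. First I would set $G = \textbf{\textrm{PSim}}_+(M_3(Q), \sigma_h)$ with $Q$ and $\sigma_h$ as constructed in Example \ref{DD}. Since $M_3(Q)$ has degree $6$ over $F$ and $\sigma_h$ is an orthogonal involution, $G$ is an adjoint algebraic group of type $D_3$. The fact that it is absolutely simple follows because $M_3(Q)$ is a central simple $F$-algebra and $\sigma_h$ is an involution of the first kind.

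Next I would verify the group is of outer type $^2D_3$ rather than inner type $^1D_3$. In the construction of Example \ref{DD}, the adjoint involution $\sigma_h$ on $M_3(Q)$ has discriminant $\disc(\sigma_h) = p \cdot t$, and this class is non-trivial in $F^*/F^{*2}$ (using for instance that $\langle\langle p, u \rangle\rangle$ is anisotropic over $\mathbb{Q}_p$, so certainly $p \cdot t$ is not a square in $\mathbb{Q}_p(t)$). Since the inner/outer distinction for type $D_n$ is governed by the triviality of the discriminant of the associated involution, the non-triviality of $\disc(\sigma_h)$ places $G$ in the outer class $^2D_3$.

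Finally I would invoke the non-triviality of R-equivalence already established in Example \ref{DD}, namely that $-p \cdot t$ represents a non-trivial class in $\textbf{\textrm{PSim}}_+(M_3(Q), \sigma_h)(F)/R$. Combining this with Proposition $1$ of \cite{M}, which states that for any rational algebraic group $H$ over $F$ one has $H(F)/R = (1)$, the contrapositive yields that $G$ cannot be rational over $F$.

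No step here is a real obstacle: Example \ref{DD} has already done the substantial work of exhibiting the non-trivial R-equivalence class via the cup-product computation $(-pt) \cup \Nrd(Q_i) = (t) \cup (p) \cup (u) \neq 0$ in $H^3(F, \mu_2)$. The only care needed is the bookkeeping that identifies $G$ as an absolutely simple adjoint group of type $^2D_3$ (as opposed to $^1D_3$), which is handled by tracking $\disc(\sigma_h) = p \cdot t \notin F^{*2}$.
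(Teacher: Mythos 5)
Your proposal is correct and follows exactly the paper's own argument: take $\textbf{\textrm{PSim}}_+(M_3(Q),\sigma_h)$ from Example \ref{DD}, use the non-triviality of its group of R-equivalence classes established there, and conclude non-rationality from Proposition $1$ of \cite{M}. Your additional bookkeeping that $\disc(\sigma_h)=p\cdot t\notin F^{*2}$ forces outer type $^2D_3$ is a point the paper leaves implicit, but it is correct and does not change the route.
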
 
	\begin{proof}
    For $F$ as in the theorem, consider the central simple algebra with involution $(M_3 (Q), \sigma_h)$ constructed as in the example \ref{DD} above. We have seen in example \ref{DD}, that the group of rational equivalence classes of $\textrm{\textbf{PSim}}_+(M_3 (Q),\sigma_h)(F)/R$ is non-trivial. Hence the group $\textrm{\textbf{PSim}}_+(M_3(Q), \sigma_h)$ is not rational (proposition $1$, \cite{M}).
    \end{proof}

	\bigskip
	\medskip 
	
	\subsection{Groups of type $^1D_n$} 
	
	Let $F$ be the function field of a smooth, geometrically integral curve over a $p$-adic field with $p\neq 2$. Let $G$ be an absolutely simple adjoint algebraic group of inner type $^1D_n$ defined over $F$.
        Let $(A, \sigma)$ be a central simple algebra with orthogonal involution over $F$ associated to the group $G$. The group $G$ being of type $^1D_n$ translates in to $(A, \sigma)$ having even rank and trivial discriminant. 

        If we assume further that $(A, \sigma)$ has
        trivial Clifford invariant then the group of rational equivalence classes, $G(F)/R=(1)$ by Theorem $7.2$, \cite{PS0}. Combining this with the results
        in this paper leaves only one case open
        where the behaviour of $G(F)/R$ is not known, namely when $(A,\sigma)$ has even rank, trivial discriminant and \emph{non-trivial} Clifford invariant.

	\medskip 
	\bigskip

\medskip 

\medskip 

\noindent 
Preeti Raman \\ 
Department of Mathematics \\
Indian Institute of Technology (Bombay) \\ 
Powai, Mumbai-400076, India\\ 
{\tt{preeti@math.iitb.ac.in}} \\
\medskip

\noindent
Abhay Soman \\
Department of Mathematical Sciences \\
Indian Institute of Science Education and Research, Mohali \\
Sector 81, SAS Nagar, Manauli, Punjab-140306, India \\ 
\tt{somanabhay@iisermohali.ac.in}
\end{document}